\definecolor{labelkey}{rgb}{0,0.08,0.45}
\definecolor{rekey}{rgb}{0,0.6,0.0}
\definecolor{Brown}{rgb}{0.45,0.0,0.05}
\DeclareMathOperator{\weakstarly}{\stackrel{\mathrm{w*}}{\rightharpoondown}}
\newcommand{\wk}{\ensuremath{\operatorname{w*}}}
\newcommand{\infconv}{\ensuremath{\mbox{\small$\,\square\,$}}}
\newcommand{\scal}[2]{\langle{{#1},{#2}}\rangle}
\newcommand{\RR}{\ensuremath{\mathbb R}}
\newcommand{\RX}{\ensuremath{\,\left]-\infty,+\infty\right]}}
\newcommand{\RXX}{\ensuremath{\,\left[-\infty,+\infty\right]}}
\newcommand{\NN}{\ensuremath{\mathbb N}}
\newcommand{\thalb}{\ensuremath{\tfrac{1}{2}}}
\newcommand{\menge}[2]{\big\{{#1} \mid {#2}\big\}}
\newcommand{\To}{\ensuremath{\rightrightarrows}}
\newcommand{\spand}{\operatorname{span}}
\newcommand{\dom}{\ensuremath{\operatorname{dom}}}
\newcommand{\gra}{\ensuremath{\operatorname{gra}}}
\newcommand{\inte}{\ensuremath{\operatorname{int}}}
\newcommand{\ran}{\ensuremath{\operatorname{ran}}}
\renewcommand{\phi}{\ensuremath{\varphi}}
\newtheorem{theorem}{Theorem}[section]
\newtheorem{fact}[theorem]{Fact}
\newtheorem{corollary}[theorem]{Corollary}
\newtheorem{proposition}[theorem]{Proposition}
\newtheorem{definition}[theorem]{Definition}
\theoremstyle{plain}{\theorembodyfont{\rmfamily}
}
\theoremstyle{plain}{\theorembodyfont{\rmfamily}
}
\theoremstyle{plain}{\theorembodyfont{\rmfamily}
}
\theoremstyle{plain}{\theorembodyfont{\rmfamily}
\newtheorem{example}[theorem]{Example}}
\theoremstyle{plain}{\theorembodyfont{\rmfamily}
\newtheorem{remark}[theorem]{Remark}}
\theoremstyle{plain}{\theorembodyfont{\rmfamily}
}
\def\endproof{\ensuremath{\quad \hfill \blacksquare}}
\begin{document}


\title{\sffamily{The Br\'ezis-Browder Theorem in a general Banach space}}

\author{
Heinz H.\ Bauschke\thanks{Mathematics, Irving K.\ Barber School,
University of British Columbia, Kelowna, B.C. V1V 1V7, Canada. E-mail:
\texttt{heinz.bauschke@ubc.ca}.},\;
Jonathan M. Borwein\thanks{CARMA, University of Newcastle,
 Newcastle, New South Wales 2308, Australia. E-mail:
\texttt{jonathan.borwein@newcastle.edu.au}. Distinguished Professor
King Abdulaziz University, Jeddah.},\;
 Xianfu
Wang\thanks{Mathematics, Irving K.\ Barber School, University of British Columbia,
Kelowna, B.C. V1V 1V7, Canada. E-mail:
\texttt{shawn.wang@ubc.ca}.},\; and Liangjin\
Yao\thanks{Mathematics, Irving K.\ Barber School, University of British Columbia,
Kelowna, B.C. V1V 1V7, Canada.
E-mail:  \texttt{ljinyao@interchange.ubc.ca}.}}

\date{October 25,  2011}
\maketitle

\begin{abstract} \noindent
During the 1970s Br\'ezis and Browder presented a now classical characterization
of maximal monotonicity of monotone linear relations in reflexive spaces.
In this paper, we  extend and refine their result to a general Banach space.
\end{abstract}

\noindent {\bfseries 2010 Mathematics Subject Classification:}\\
{Primary  47A06, 47H05;
Secondary 47B65, 47N10, 90C25}

\noindent {\bfseries Keywords:}
Adjoint,
Br\'ezis-Browder Theorem,
Fenchel conjugate,
linear relation,
maximally monotone operator,
monotone operator,
  operator of type (D),
 operator of type (FP),
operator of type (NI),
set-valued operator,
 skew operator, symmetric operator.

\section{Introduction}

Throughout this paper, we assume that
\begin{align*}\text{$X$ is a real Banach space with norm $\|\cdot\|$,}
\end{align*}
that $X^*$ is the continuous dual of $X$,
 and
that $X$ and $X^*$ are paired by $\scal{\cdot}{\cdot}$.
The \emph{closed unit ball} in $X$ is denoted by $B_X=
\menge{x\in X}{\|x\|\leq1}$, and $\NN=\{1,2,3,\ldots\}$.

We identify
$X$ with its canonical image in the bidual space $X^{**}$.
As always, $X\times X^*$ and $(X\times X^*)^* = X^*\times X^{**}$
are paired via $$\scal{(x,x^*)}{(y^*,y^{**})} =
\scal{x}{y^*} + \scal{x^*}{y^{**}},$$ where $(x,x^*)\in X\times X^*$
and $(y^*,y^{**}) \in X^*\times X^{**}$.

Let $A\colon X\To X^*$
be a \emph{set-valued operator} (also known as multifunction)
from $X$ to $X^*$, i.e., for every $x\in X$, $Ax\subseteq X^*$,
and let
$\gra A = \menge{(x,x^*)\in X\times X^*}{x^*\in Ax}$ be
the \emph{graph} of $A$. The \emph{domain} of $A$, written as
 $\dom A$,  is $\dom A= \menge{x\in X}{Ax\neq\varnothing}$ and
$\ran A=A(X)$ is the \emph{range} of $A$. We say $A$ is a
 \emph{linear relation} if $\gra A$ is a linear
subspace.
Now let $U\times V\subseteq X\times X^*$.
We say that $A$ is \emph{monotone} with respect to $U\times V$,
if for every $(x,x^*)\in(\gra A)\cap (U\times V)$
and $(y,y^*)\in(\gra A)\cap (U\times V)$, we have
\begin{equation}
\scal{x-y}{x^*-y^*}\geq 0.
\end{equation}
Of course, by (classical) monotonicity we mean
monotonicity with respect to $X\times X^*$.
Furthermore, we say that
$A$ is \emph{maximally monotone} with respect to $U\times V$
if $A$ is monotone with respect to $U\times V$ and for
every operator $B\colon X\To X^*$ that is monotone with respect to $U\times
V$ and such that $(\gra A)\cap (U\times V)\subseteq (\gra B)\cap (U\times
V)$, we necessarily have $(\gra A)\cap (U\times V)=(\gra B)\cap(U\times
V)$. Thus, (classical) maximal monotonicity corresponds to
maximal monotonicity with respect to $X\times X^*$.
This slightly unusual presentation is required to state our main results;
moreover,
it yields a more concise formulation of monotone operators of type
(FP).

Now let $A:X\rightrightarrows X^*$ be monotone
and $(x,x^*)\in X\times X^*$.
 We say $(x,x^*)$ is \emph{monotonically related to}
$\gra A$ if
\begin{align*}
\langle x-y,x^*-y^*\rangle\geq0,\quad \forall (y,y^*)\in\gra A.\end{align*}
If $Z$ is
a real  Banach space with continuous dual $Z^*$ and a subset $S$ of
$Z$, we denote $S^\bot$ by $S^\bot = \menge{z^*\in Z^*}{\langle
z^*,s\rangle= 0,\quad \forall s\in S}$. Given a subset $D$ of  $Z^*$,
we define $D_{\bot}$  by
$D_\bot = \menge{z\in Z}{\langle z,d^*\rangle= 0,
\quad \forall d^*\in D}=D^{\bot}\cap Z$.

The operator \emph{adjoint} of $A$,
written as $A^*$, is defined by
\begin{equation*}
\gra A^* =
\menge{(x^{**},x^*)\in X^{**}\times X^*}{(x^*,-x^{**})\in(\gra A)^{\bot}}.
\end{equation*}
Note that the adjoint is always a linear relation
with $\gra A^*\subseteq X^{**}\times X^* \subseteq X^{**}\times X^{***}$.
These inclusions make it possible to consider
monotonicity properties of $A^*$; however, care is required:
as a linear relation, $\gra A^* \subseteq X^{**}\times X^*$ while
as a potential monotone operator we are led to consider
$\gra A^* \subseteq X^{**}\times X^{***}$.
Now let $A:X\rightrightarrows X^*$ be a linear relation.
 We say that $A$ is
\emph{skew} if $\gra A \subseteq \gra (-A^*)$;
equivalently, if $\langle x,x^*\rangle=0,\; \forall (x,x^*)\in\gra A$.
Furthermore,
$A$ is \emph{symmetric} if $\gra A
\subseteq\gra A^*$; equivalently, if $\scal{x}{y^*}=\scal{y}{x^*}$,
$\forall (x,x^*),(y,y^*)\in\gra A$.

We now recall three fundamental subclasses of maximally
monotone operators.

 \begin{definition}
 Let $A:X\To X^*$ be maximally monotone.
 Then three key types of monotone operators are defined as follows.
 \begin{enumerate}
 \item $A$ is
\emph{of dense type or type (D)} (1971, \cite{Gossez3}) if for every
$(x^{**},x^*)\in X^{**}\times X^*$ with
\begin{align*}
\inf_{(a,a^*)\in\gra A}\langle a-x^{**}, a^*-x^*\rangle\geq 0,
\end{align*}
there exist a  bounded net
$(a_{\alpha}, a^*_{\alpha})_{\alpha\in\Gamma}$ in $\gra A$
such that
$(a_{\alpha}, a^*_{\alpha})_{\alpha\in\Gamma}$
weak*$\times$strong converges to
$(x^{**},x^*)$.
\item $A$ is
\emph{of type negative infimum (NI)} (1996, \cite{SiNI}) if
\begin{align*}
\sup_{(a,a^*)\in\gra A}\big(\langle a,x^*\rangle+\langle a^*,x^{**}\rangle
-\langle a,a^*\rangle\big)
\geq\langle x^{**},x^*\rangle,
\quad \forall(x^{**},x^*)\in X^{**}\times X^*.
\end{align*}
\item
$A$ is \emph{of type Fitzpatrick-Phelps (FP)} (1992, \cite{FP}) if
whenever $V$ is an open convex subset of $X^*$ such that $V\cap \ran
A\neq\varnothing$,
it must follow
that $A$ is maximally monotone with respect to $X\times V$.
\end{enumerate}

\end{definition}
\begin{fact}\emph{(See \cite{Si, Si2, BorVan}.)}\label{ITPR}
The following are maximally monotone of type (D), (NI), and (FP).
\begin{enumerate}
\item $\partial f$, where
 $f: X\to\RX$ is convex, lower semicontinuous, and proper;
\item $A\colon X\To X^*$, where $A$ is maximally monotone and $X$ is
reflexive.
\end{enumerate}
\end{fact}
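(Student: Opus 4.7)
The plan is to treat the two items separately, exploiting the fact that in the reflexive setting the subclasses (D), (NI), and (FP) all collapse to classical maximal monotonicity. For item (ii), reflexivity gives $X^{**}=X$ and $X^{***}=X^*$ canonically, and the weak* topology on $X^{**}$ agrees with the weak topology on $X$. Under this identification, type (D) is immediate by taking the constant net at any $(x,x^*)$ monotonically related to $\gra A$; type (NI) becomes the classical inequality equivalent, via Fenchel conjugation, to Rockafellar's surjectivity $\ran(A+J)=X^*$; and type (FP) follows from Rockafellar's local boundedness theorem combined with a Zorn-style extension argument applied inside $X\times V$. Hence (ii) is a repackaging of standard reflexive-space facts.

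For item (i), the operator $\partial f$ requires the Fitzpatrick-function machinery developed by Gossez, Rockafellar, and later authors. Starting from the Fitzpatrick function $F_{\partial f}$ extended naturally to $\widetilde F_{\partial f}(x^{**},x^*)$ on $X^{**}\times X^*$, one uses the Fenchel-Young identity $\langle a,a^*\rangle=f(a)+f^*(a^*)$ on $\gra\partial f$ together with the Br\o ndsted-Rockafellar approximation theorem to identify $\widetilde F_{\partial f}(x^{**},x^*)$ with $f^{**}(x^{**})+f^*(x^*)$; the Fenchel-Young inequality in the $(X^{**},X^{***})$ pairing then yields (NI). For (D), the regularization $f_n=f\infconv\tfrac{n}{2}\|\cdot\|^2$ produces a sequence of fuller-domain, more regular convex functions whose subdifferentials, combined with Goldstine's weak* density of $B_X$ in $B_{X^{**}}$, assemble a bounded net in $\gra\partial f$ converging weak*$\times$strong to the prescribed $(x^{**},x^*)$. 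Property (FP) follows from the original Fitzpatrick-Phelps argument, which for an open convex $V\subseteq X^*$ meeting $\ran\partial f$ leverages continuity of $f^*$ at interior points of $V\cap\dom f^*$ to produce the required monotone extension.

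The principal obstacle is the (D) assertion for $\partial f$ in a genuinely nonreflexive space: Minty or duality-mapping surjectivity is unavailable, so one must carefully tune the regularization parameter $n$, maintain boundedness of the approximating net, and land precisely at the target in the weak*$\times$strong topology. Once (D) is secured, the implications (D) $\Rightarrow$ (NI) and (D) $\Rightarrow$ (FP) are essentially formal and documented in the cited references \cite{Si, Si2, BorVan}, so the entire Fact reduces to this single nonreflexive regularization argument plus the reflexive reduction of item (ii).
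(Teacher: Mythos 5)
This statement is a quoted \emph{Fact}: the paper supplies no proof, only the citations to \cite{Si,Si2,BorVan}, so there is no internal argument to compare yours against and the real question is whether your reconstruction of the literature would stand on its own. Much of it would. For item (ii), type (D) is indeed immediate (the constant net), and type (NI) for a maximally monotone $A$ on reflexive $X$ is simply Fitzpatrick's inequality $F_A\geq\langle\cdot,\cdot\rangle$ on $X\times X^*$, which follows directly from maximality --- your detour through $\ran(A+J)=X^*$ is unnecessary though not wrong. For item (i), the identity $\sup_{(a,a^*)\in\gra\partial f}\big(\langle a,x^*\rangle+\langle a^*,x^{**}\rangle-\langle a,a^*\rangle\big)=f^*(x^*)+f^{**}(x^{**})$ is the correct crux of (NI): the inequality $\leq$ is Fenchel--Young factorwise, but the inequality $\geq$ \emph{is} the nontrivial Simons/Br\o ndsted--Rockafellar lemma, so calling it an ``identification'' conceals rather than supplies the proof.

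The genuine gaps are in your two (FP) arguments and in the mechanics of (D). For $\partial f$, the Fitzpatrick--Phelps proof does not ``leverage continuity of $f^*$ at interior points of $V\cap\dom f^*$'' --- $f^*$ need not be continuous anywhere on $\dom f^*$ in a general Banach space; their argument is an $\varepsilon$-subdifferential approximation (Br\o ndsted--Rockafellar again) kept inside the open set $V$. For the reflexive case, ``local boundedness plus a Zorn-style extension inside $X\times V$'' cannot work as stated: Zorn produces \emph{some} maximal monotone subset of $X\times V$ containing $\gra A\cap(X\times V)$, whereas type (FP) requires showing that $\gra A\cap(X\times V)$ is \emph{already} maximal there, i.e.\ that every $(x,x^*)\in X\times V$ monotonically related to $\gra A\cap(X\times V)$ lies in $\gra A$; this needs the reflexive surjectivity/sum machinery (e.g.\ maximality of $A+N_C$ for suitable closed convex $C$), not an abstract extension. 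Finally, the (D) argument via $f\infconv\tfrac{n}{2}\|\cdot\|^2$ must confront non-attainment of the infimal convolution off reflexivity; Gossez's route instead takes a bounded net $x_\alpha$ weak* converging to $x^{**}$ with $f(x_\alpha)\to f^{**}(x^{**})$ and repairs it with Br\o ndsted--Rockafellar. None of this is fatal --- these are exactly the theorems the paper imports by citation --- but as written your sketch would not close without them.
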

These and other relationships known
amongst these and other monotonicity notions are described in
\cite[Chapter 9]{BorVan}.
As we see in \cite{BBWY2} and \cite{Si4, SiNI, MarSva}, it is now known that the
three classes coincide.

Monotone operators have proven to be a key class of objects in both
modern Optimization and Analysis; see, e.g., \cite{Bor1,Bor2,Bor3},
the books \cite{BC2011,
BorVan,BurIus,ph,Si,Si2,RockWets,Zalinescu,Zeidlerlin,Zeidler}
and the references therein.

Let us now precisely describe the aforementioned Br\'{e}zis-Browder Theorem:

\begin{theorem}[Br\'ezis-Browder in reflexive Banach space \cite{BB76,Brezis-Browder}]
 \label{thm:brbr} Suppose that $X$ is reflexive.
Let $A\colon X \To X^*$ be a monotone linear relation
such that $\gra A$ is closed.
Then
$A$ is maximally monotone if and only if the adjoint
$A^*$ is monotone.
\end{theorem}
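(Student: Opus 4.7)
The plan is to establish the two implications separately, exploiting the reflexivity of $X$ to identify $X^{**}$ with $X$; then $A^*$ sits inside $X\times X^*$ and can be compared with $A$ directly. For a linear relation, monotonicity reduces to nonnegativity of the canonical pairing on the graph (the graph is a subspace containing $(0,0)$, so differences generate all of it). By definition, $(x,x^*)\in\gra A^*$ iff the adjoint identity $\langle a,x^*\rangle=\langle x,a^*\rangle$ holds for every $(a,a^*)\in\gra A$.

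\emph{Direction $(\Rightarrow)$.} Assume $A$ is maximally monotone and let $(x,x^*)\in\gra A^*$. The adjoint identity telescopes
\[
\langle a-x,\,a^*-(-x^*)\rangle=\langle a,a^*\rangle-\langle x,x^*\rangle\qquad\forall\,(a,a^*)\in\gra A.
\]
If $\langle x,x^*\rangle<0$, then monotonicity of $A$ makes the right-hand side strictly positive, so $(x,-x^*)$ is monotonically related to $\gra A$; maximality of $A$ then forces $(x,-x^*)\in\gra A$, and feeding this back into the adjoint identity collapses $\langle x,x^*\rangle=\langle x,-x^*\rangle$, a contradiction. Hence $\langle x,x^*\rangle\geq 0$ and $A^*$ is monotone.

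\emph{Direction $(\Leftarrow)$.} Assume $A^*$ is monotone and let $(x,x^*)$ be monotonically related to $\gra A$; the goal is $(x,x^*)\in\gra A$. Since $\gra A$ is a closed linear subspace of the reflexive product $X\times X^*$, the bipolar theorem together with the definition of the adjoint gives the reformulation $\gra A=\gra A^{**}$, i.e.,
\[
(x,x^*)\in\gra A\quad\Longleftrightarrow\quad\langle x,v^*\rangle=\langle v,x^*\rangle\;\;\forall\,(v,v^*)\in\gra A^*.
\]
To extract usable information from the monotonic relation I would use linearity: $(\lambda a,\lambda a^*)\in\gra A$ for all $\lambda\in\RR$, so
\[
\lambda^2\langle a,a^*\rangle-\lambda\bigl(\langle a,x^*\rangle+\langle x,a^*\rangle\bigr)+\langle x,x^*\rangle\geq 0\qquad\forall\,\lambda\in\RR,\,(a,a^*)\in\gra A,
\]
whose nonpositive discriminant yields $\bigl(\langle a,x^*\rangle+\langle x,a^*\rangle\bigr)^2\leq 4\langle a,a^*\rangle\langle x,x^*\rangle$, and in particular $\langle x,x^*\rangle\geq 0$ at $\lambda=0$. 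This bound is exactly what certifies that the enlarged linear relation $\tilde A$ with $\gra\tilde A=\gra A+\RR(x,x^*)$ is still monotone; its graph is closed as a one-dimensional enlargement of a closed subspace of the reflexive product, and a short calculation gives $\gra\tilde A^*=\{(v,v^*)\in\gra A^*:\langle x,v^*\rangle=\langle v,x^*\rangle\}$.

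What remains is to force the equality $\gra\tilde A^*=\gra A^*$, which is precisely the desired adjoint identity at $(x,x^*)$. I would finish by combining monotonicity of $A^*$ with either Rockafellar's surjectivity characterization of maximal monotonicity in reflexive spaces (applied to $\tilde A+J$) or with the Fitzpatrick function, which for a linear relation admits the explicit form
\[
F_A(x,x^*)=\sup\Big\{\tfrac{(\langle a,x^*\rangle+\langle x,a^*\rangle)^2}{4\langle a,a^*\rangle}\;:\;(a,a^*)\in\gra A,\;\langle a,a^*\rangle>0\Big\},
\]
together with the criterion $F_A(x,x^*)\leq\langle x,x^*\rangle\iff(x,x^*)$ is monotonically related to $\gra A$. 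The main obstacle will be this last combining step: matching the quadratic/discriminant information with the monotonicity of $A^*$ via the symplectic-duality interpretation $\gra A^*=(\gra A)^{\omega\perp}$ inside $X\times X^*$, to rule out strict inequality between $F_A(x,x^*)$ and $\langle x,x^*\rangle$ and thereby conclude $(x,x^*)\in\gra A$ through Fitzpatrick's theorem.
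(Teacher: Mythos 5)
Your forward implication is complete and correct: for $(x,x^*)\in\gra A^*\subseteq X\times X^*$ the adjoint identity $\langle a,x^*\rangle=\langle x,a^*\rangle$ does telescope to $\langle a-x,a^*+x^*\rangle=\langle a,a^*\rangle-\langle x,x^*\rangle\geq-\langle x,x^*\rangle$, so $\langle x,x^*\rangle<0$ would make $(x,-x^*)$ monotonically related to $\gra A$, hence in $\gra A$ by maximality, and the adjoint identity then forces $\langle x,x^*\rangle=0$, a contradiction. This is exactly the argument the paper itself uses (see the proof of Proposition~\ref{f:PheSim}\ref{f:PheSim:brah:1}, which is borrowed from Br\'ezis--Browder), and since monotonicity of a linear relation is equivalent to nonnegativity of $\langle\cdot,\cdot\rangle$ on its graph, nothing more is needed there.

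The reverse implication, however, is the substantive half of the theorem and your proposal does not actually prove it. You correctly set up the discriminant inequality $(\langle a,x^*\rangle+\langle x,a^*\rangle)^2\leq 4\langle a,a^*\rangle\langle x,x^*\rangle$, the monotone one-dimensional enlargement $\tilde A$, and the identification $\gra\tilde A^*=\menge{(v,v^*)\in\gra A^*}{\langle x,v^*\rangle=\langle v,x^*\rangle}$, and you correctly observe that $(x,x^*)\in\gra A=\gra A^{**}$ is equivalent to $\gra\tilde A^*=\gra A^*$. But the step that would force this equality from monotonicity of $A^*$ is precisely where you stop, and none of the tools you name closes it: Fitzpatrick's criterion $F_A(x,x^*)\leq\langle x,x^*\rangle$ merely restates that $(x,x^*)$ is monotonically related to $\gra A$ (your hypothesis), and the companion inequality $F_A\geq\langle\cdot,\cdot\rangle$ is not available for a merely monotone $A$ --- it is essentially the type (NI) property, i.e.\ part of what is being proved. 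Likewise, applying Rockafellar's surjectivity criterion to $\tilde A+J$ requires knowing where monotonicity of $A^*$ enters the range computation, which is the whole difficulty. The paper closes this gap by a variational argument (Proposition~\ref{PrTD:1}): minimize $G=F_{(z,z^*)}+\tfrac12\|\cdot\|^2+\tfrac12\|\cdot\|^2$ where $F=\iota_{\gra A}+\langle\cdot,\cdot\rangle$, use Br\o ndsted--Rockafellar (Fact~\ref{f:FJB5}) and the subdifferential sum rule to produce points $(a_n,a_n^*)\in\gra A-(z,z^*)$ whose associated normals $(z_n^*,z_n^{**})$ lie in $(\gra A)^\perp=\gra(-A^*)$, and then invoke monotonicity of $A^*$ exactly once, in the form $\langle z_n^*,z_n^{**}\rangle\leq 0$, to derive the norm estimate \eqref{BrB:NPS13} forcing $(a_n,a_n^*)\to(0,0)$ and hence $(z,z^*)\in\gra A$ by closedness of the graph. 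Some such quantitative use of $A^*$'s monotonicity is unavoidable; your proposal identifies the right target but leaves the decisive step as an acknowledged ``obstacle.''
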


In this paper, we generalize the Br\'{e}zis-Browder Theorem to an arbitrary Banach space.
(See \cite{Si3} for Simons' recent extension of the above result
to {symmetrically self-dual Banach spaces} (SSDB)
spaces as defined in \cite[\S21]{Si2}.)

Our main result is the following.

\begin{theorem}[Br\'ezis-Browder in general Banach space] \label{thm:bbwy}
Let $A\colon X\rightrightarrows X^*$ be a monotone linear
 relation such that $\gra A$ is closed.
Then the following are equivalent.
\begin{enumerate}
\item $A$ is maximally monotone of type (D).
\item  $A$ is  maximally monotone  of type (NI).
\item  $A$ is  maximally monotone  of type (FP).
\item  $A^*$ is monotone.
\end{enumerate}
\end{theorem}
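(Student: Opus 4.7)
Because the three subclasses (D), (NI), and (FP) coincide for every maximally monotone operator (recorded in the remark after Fact~\ref{ITPR}), the equivalences (i) $\iff$ (ii) $\iff$ (iii) are automatic once $A$ is known to be maximally monotone of any one of the three types. The substantive content of the theorem is therefore the equivalence of (iv) with any one of (i)--(iii). My proposal is to show (iii) $\Rightarrow$ (iv) and (iv) $\Rightarrow$ the conjunction of maximal monotonicity with, say, (NI); the remaining arrows are then free.

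For the direction ``maximal monotonicity plus (FP) or (NI) $\Rightarrow A^*$ monotone'': let $(x^{**}, x^*) \in \gra A^*$, so that $\langle a, x^*\rangle = \langle a^*, x^{**}\rangle$ for every $(a, a^*) \in \gra A$ by the definition of the adjoint. Combine this identity with the (NI) inequality at $(x^{**}, x^*)$ and exploit the linear structure of $\gra A$ by replacing $(a, a^*)$ with $t(a, a^*)$, $t \in \RR$; the resulting quadratic estimate, interpreted through the Fitzpatrick function $F_A$ (whose conjugate encodes (NI)) and combined with the sign condition $\langle a, a^*\rangle \geq 0$ on $\gra A$ and the maximality hypothesis, forces $\langle x^{**}, x^*\rangle \geq 0$. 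The (FP) variant proceeds similarly, choosing the open convex subset $V \subseteq X^*$ adapted to $x^*$.

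For the converse (iv) $\Rightarrow$ (ii), maximality is the delicate step. Assume $(x, x^*) \in X \times X^*$ is monotonically related to $\gra A$ but not contained in $\gra A$; use closedness of $\gra A$ to separate $(x, x^*)$ from $\gra A$ by a nonzero functional in $(X \times X^*)^* = X^* \times X^{**}$. Suitably rearranged, this separating functional produces an element of $\gra A^*$ for which the canonical pairing can be forced to be strictly negative, contradicting (iv). Granted maximality, the (NI) inequality follows by identifying it with $F_A^*(x^*, x^{**}) \geq \langle x^{**}, x^*\rangle$ and exploiting the tractable quadratic structure of $F_A$ on the linear subspace $\gra A$.

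The principal obstacle is the maximality step. The reflexive Br\'ezis--Browder proof locates the separating element inside $X$ itself via weak compactness of closed bounded convex sets, a tool that is unavailable in a general Banach space; consequently the separating functional produced above lives naturally in $X^{**} \times X^*$. Verifying that the corresponding element of $\gra A^*$ has strictly negative canonical pairing---so that (iv) is genuinely violated---requires careful bookkeeping between $\gra A$, its orthogonal complement in $X^* \times X^{**}$, and the canonical embedding $X \hookrightarrow X^{**}$, with Fitzpatrick-function duality bridging the passage to the bidual.
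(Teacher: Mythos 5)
Your overall architecture agrees with the paper's: everything reduces to showing that monotonicity of $A^*$ together with closedness of $\gra A$ forces $A$ to be maximally monotone, after which the equivalence of (D), (NI), (FP), and monotonicity of the adjoint for \emph{maximally} monotone linear relations (Fact~\ref{TypeD:1}) finishes the job. The gap is in your execution of that key maximality step. If $(x,x^*)$ is monotonically related to $\gra A$ but $(x,x^*)\notin\gra A$, Hahn--Banach does give you $(y^*,y^{**})\in(\gra A)^{\bot}$ with $\langle x,y^*\rangle+\langle x^*,y^{**}\rangle\neq 0$, and indeed $(y^{**},-y^*)\in\gra A^*$; but you never explain why such a functional can be chosen with $\langle y^{**},-y^*\rangle<0$. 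Monotonicity of $A^*$ says precisely that $\langle y^{**},y^*\rangle\leq 0$ for \emph{every} element of $(\gra A)^{\bot}$, and the hypothesis on $(x,x^*)$ is a quadratic sign condition that does not interact with an arbitrary linear separation: a generic separating functional yields no contradiction. The sentence ``the canonical pairing can be forced to be strictly negative'' is exactly the content of the theorem and is left unproved. (Your sketch of (NI) $\Rightarrow$ (iv) has a similar soft spot: applying (NI) at $(x^{**},x^*)\in\gra A^*$ and scaling by $t$ only bounds a supremum from \emph{below}, which is vacuous when that supremum is large; the clean route is to apply (NI) at $(-x^{**},x^*)$, where the adjoint identity collapses the supremum to $0$ and yields $\langle x^{**},x^*\rangle\geq 0$ directly.)

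What the paper does instead (Proposition~\ref{PrTD:1}) is to \emph{construct} the required element of $(\gra A)^{\bot}$ variationally rather than by separation. One forms $G(x,x^*)=\iota_{\gra A}(z+x,z^*+x^*)+\langle x,x^*\rangle+\tfrac{1}{2}\|x\|^2+\tfrac{1}{2}\|x^*\|^2$, observes that $\inf G\geq 0$ so $(0,0)\in\dom G^*$, and applies Br\o ndsted--Rockafellar (Fact~\ref{f:FJB5}) together with the subdifferential sum rule (Fact~\ref{f:F4}) to produce points $(a_n,a_n^*)$ with $(z+a_n,z^*+a_n^*)\in\gra A$, normals $(z_n^*,z_n^{**})\in(\gra A)^{\bot}$, and duality-map selections, all summing to something tending to $0$. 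The two sign conditions --- $\langle z_n^*,z_n^{**}\rangle\leq 0$ from monotonicity of $A^*$ and $\langle a_n,a_n^*\rangle\geq 0$ from the monotonically-related hypothesis --- then combine into a quadratic inequality forcing $\|a_n\|+\|a_n^*\|\to 0$, whence $(z,z^*)\in\gra A$ by closedness of the graph. Some such quantitative mechanism, which selects the normal functional jointly with a near-minimizer rather than by abstract separation, is indispensable; without it your proposal does not close.
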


In Section~\ref{s:aux}, we collect auxiliary results for future
reference and for the reader's convenience.
In Section~\ref{s:tech}, we provide the key technical step showing that when $A^*$ is monotone then $A$ is of type (D).
Our central result, the
generalized
Br\'ezis-Browder Theorem
(Theorem~\ref{thm:bbwy}), is then proved in Section~\ref{s:main}.
Finally, in Section \ref{s:skew} with the necessary proviso that the domain be closed, we establish further results
such as Theorem~\ref{BrBrTDS:1} relating to the skew part of the operator. This was motivated by and
 extends \cite[Theorem~4.1]{BB} which studied the case of a bounded linear operator.

Finally, let us mention that we adopt standard convex analysis notation.
Given a subset $C$ of $X$,
$\inte C$ is the \emph{interior} of $C$,
$\overline{C}$ is   the
\emph{norm closure} of $C$.
 For the set $D\subseteq X^*$, $\overline{D}^{\wk}$
is the weak$^{*}$ closure of $D$.
If $E\subseteq X^{**}$, $\overline{E}^{\wk}$
  is the weak$^{*}$ closure of $E$ in $X^{**}$ with
 the topology induced by $X^*$.
The \emph{indicator function} of $C$, written as $\iota_C$, is defined
at $x\in X$ by
\begin{align}
\iota_C (x)=\begin{cases}0,\,&\text{if $x\in C$;}\\
+\infty,\,&\text{otherwise}.\end{cases}\end{align} For every $x\in
X$, the \emph{normal cone} operator of $C$ at $x$ is defined by
$N_C(x)= \menge{x^*\in X^*}{\sup_{c\in C}\scal{c-x}{x^*}\leq 0}$, if
$x\in C$; and $N_C(x)=\varnothing$, if $x\notin C$.

Let $f\colon X\to \RX$. Then
$\dom f= f^{-1}(\RR)$ is the \emph{domain} of $f$, and
$f^*\colon X^*\to\RXX\colon x^*\mapsto
\sup_{x\in X}(\scal{x}{x^*}-f(x))$ is
the \emph{Fenchel conjugate} of $f$.
The  \emph{lower semicontinuous hull} of $f$ is denoted by $\overline{f}$.
We say $f$ is proper if $\dom f\neq\varnothing$.
Let $f$ be proper. The \emph{subdifferential} of
$f$ is defined by
   $$\partial f\colon X\To X^*\colon
   x\mapsto \{x^*\in X^*\mid(\forall y\in
X)\; \scal{y-x}{x^*} + f(x)\leq f(y)\}.$$
For $\varepsilon \geq 0$,
the \emph{$\varepsilon$--subdifferential} of $f$ is defined by
   $\partial_{\varepsilon} f\colon X\To X^*\colon
   x\mapsto \menge{x^*\in X^*}{(\forall y\in
X)\; \scal{y-x}{x^*} + f(x)\leq f(y)+\varepsilon}$.
Note that $\partial f = \partial_{0}f$.
We  denote  by $J:=J_X$ the duality map, i.e.,
the subdifferential of the function $\tfrac{1}{2}\|\cdot\|^2$
mapping $X$ to $X^*$. For the properties of $J$, see \cite[Example~2.26]{ph}.

Let $(z,z^*)\in X\times X^*$ and $F\colon X\times X^*\to\RX$.
Then $F_{(z,z^*)}:X\times X^*\rightarrow\RX$
 \cite{MLegSva,Si2} is defined by
\begin{align}
F_{(z,z^*)}(x,x^*)&=F(z+x,z^*+x^*)-\big(\langle x,z^*\rangle+
\langle z,x^*\rangle+\langle z,z^*\rangle\big)\notag\\
&=F(z+x,z^*+x^*)-\langle z+x,z^*+x^*\rangle+\langle x,x^*\rangle,
\quad \forall(x,x^*)\in X\times X^*.\label{e:timmy}
\end{align}
Let now $Y$ be another real Banach space. We set  $P_X: X\times Y\rightarrow
X\colon (x,y)\mapsto x$.
Let $F_1, F_2\colon X\times Y\rightarrow\RX$.
Then the \emph{partial inf-convolution}  $F_1\Box_2 F_2$
is the function defined on $X\times Y$ by
\begin{equation*}F_1\Box_2 F_2\colon
(x,y)\mapsto \inf_{v\in Y}
F_1(x,y-v)+F_2(x,v).
\end{equation*}

 \section{Prerequisite results}\label{s:aux}

\begin{fact}\emph{(See \cite[Proposition~2.6.6(c)]{Megg} or
 \cite[Theorem~4.7 and Theorem~3.12]{Rudin}.)}\label{rudin:1}
Let $C$ be a subspace of  $X$, and $D$ be a subspace of $X^*$.
Then \begin{align*}(C^{\bot})_{\bot}=\overline{C}
\quad\text{and}\quad(D_{\bot})^{\bot}=\overline{D}^{\wk}.
\end{align*}
\end{fact}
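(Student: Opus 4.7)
The plan is to prove each identity by a Hahn--Banach separation argument, exploiting the duality between two different topologies: the continuous dual of $(X,\|\cdot\|)$ is $X^*$, whereas the continuous dual of $(X^*,\wk)$ is (the canonical image of) $X$. Each formula then amounts to the statement that a subspace coincides with the intersection of the kernels of the continuous linear functionals that vanish on it.

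For $(C^\bot)_\bot=\overline{C}$: The inclusion $\overline{C}\subseteq(C^\bot)_\bot$ is immediate, since each $x\in C$ annihilates $C^\bot$ by the very definition of the annihilator, and $(C^\bot)_\bot=\bigcap_{x^*\in C^\bot}\{x\in X\mid\langle x,x^*\rangle=0\}$ is norm closed as an intersection of kernels of continuous linear functionals. For the reverse inclusion, I would take $x_0\notin\overline{C}$; since $\overline{C}$ is a closed subspace, the classical Hahn--Banach theorem produces $x^*\in X^*$ with $x^*\equiv 0$ on $\overline{C}$ and $\langle x_0,x^*\rangle\neq 0$. Then $x^*\in C^\bot$ while $\langle x_0,x^*\rangle\neq 0$, so $x_0\notin (C^\bot)_\bot$.

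For $(D_\bot)^\bot=\overline{D}^{\wk}$: Again the inclusion $\overline{D}^{\wk}\subseteq(D_\bot)^\bot$ is the easy half, because $(D_\bot)^\bot=\bigcap_{x\in D_\bot}\{x^*\in X^*\mid\langle x,x^*\rangle=0\}$ is an intersection of kernels of \emph{weak*-continuous} functionals (the evaluations at points of $X$), and hence is weak* closed and contains $D$. For the reverse inclusion, given $x_0^*\notin\overline{D}^{\wk}$, I would apply Hahn--Banach separation inside the locally convex space $(X^*,\wk)$: since $\overline{D}^{\wk}$ is a weak*-closed subspace, one obtains a weak*-continuous linear functional vanishing on $\overline{D}^{\wk}$ and non-zero at $x_0^*$. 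The decisive point is that such a functional is represented by some $x\in X$, so $x\in D_\bot$ yet $\langle x,x_0^*\rangle\neq 0$, giving $x_0^*\notin(D_\bot)^\bot$.

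The main obstacle is the second identity: one must resist the temptation to use norm-Hahn--Banach on $X^*$, which would only yield an element of $X^{**}$ and thus could force one to close in $X^{**}$, producing a strictly larger set than $(D_\bot)^\bot$ in the non-reflexive case. The correct topology is forced by the requirement that the separating functional actually lie in $X$, so that it qualifies as a member of $D_\bot$; this is exactly what dictates that the closure on the right-hand side be the weak* closure rather than the norm closure.
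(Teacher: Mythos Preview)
Your argument is correct. The paper does not supply its own proof of this fact; it simply cites \cite[Proposition~2.6.6(c)]{Megg} and \cite[Theorem~4.7 and Theorem~3.12]{Rudin}, and your Hahn--Banach separation argument---using the norm topology for the first identity and the weak* topology (with $(X^*,\wk)^*=X$) for the second---is exactly the standard proof one finds in those references.
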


 \begin{fact}[Rockafellar] \label{f:F4}
\emph{(See \cite[Theorem~3(b)]{Rock66}, \cite[Theorem~18.1]{Si2}
or
\cite[Theorem~2.8.7(iii)]{Zalinescu}.)}
Let $f,g:  X\rightarrow\RX$ be proper convex functions.
Assume that there exists a point $x_0\in\dom f \cap \dom g$
such that $g$ is continuous at $x_0$. Then
$$\partial (f+g)=\partial f+\partial g.$$
\end{fact}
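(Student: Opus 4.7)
The plan is to prove the two inclusions separately, with all the work lying in the nontrivial direction. The inclusion $\partial f + \partial g \subseteq \partial(f+g)$ is an immediate consequence of the definition: for $x^* \in \partial f(x)$ and $y^* \in \partial g(x)$, adding the subgradient inequalities $f(y) \geq f(x) + \langle y-x, x^* \rangle$ and $g(y) \geq g(x) + \langle y-x, y^* \rangle$ over all $y \in X$ yields $x^* + y^* \in \partial(f+g)(x)$, and this uses neither convexity nor the constraint qualification.

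For the reverse inclusion, fix $x \in \dom f \cap \dom g$ and $z^* \in \partial(f+g)(x)$. By a translation in the domain combined with subtraction of a continuous linear functional, I can replace $f$ by $y \mapsto f(x+y) - f(x) - \langle y, z^* \rangle$ and $g$ by $y \mapsto g(x+y) - g(x)$, which reduces the task to the following canonical form: given proper convex $f,g$ with $f(0) = g(0) = 0$ and $f+g \geq 0$, produce $x^* \in X^*$ with $-x^* \in \partial f(0)$ and $x^* \in \partial g(0)$ (so that $(-x^*) + x^* = 0$ recovers the vanishing image of $z^*$ under the translation).

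The core argument is a classical Hahn--Banach sandwich in $X \times \RR$. Consider
\begin{equation*}
C_1 = \epi f \qquad \text{and} \qquad C_2 = \{(y,r) \in X \times \RR : r \leq -g(y)\}.
\end{equation*}
Continuity of $g$ at $x_0$ forces $g$ to be continuous on a neighborhood $U$ of $x_0$ by the standard convexity argument, so $\{(y,r) : y \in U,\; r < -g(y)\} \subseteq \inte C_2$ and in particular $\inte C_2 \neq \varnothing$. The inequality $f+g \geq 0$ rules out any point of $C_1 \cap \inte C_2$ (such a point would give $f(y) + g(y) < 0$). The geometric Hahn--Banach theorem therefore yields $(x^*,\lambda) \in (X^* \times \RR) \setminus \{0\}$ and $\alpha \in \RR$ with $\langle y, x^* \rangle + \lambda r \geq \alpha$ on $C_1$ and $\leq \alpha$ on $C_2$. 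Letting $r \to +\infty$ in $C_1$ at fixed $y$ and $r \to -\infty$ in $C_2$ shows $\lambda \geq 0$. After normalizing $\lambda = 1$, the two separating inequalities become $f(y) \geq \alpha - \langle y, x^* \rangle$ and $g(y) \geq \langle y, x^* \rangle - \alpha$; testing at $y=0$ pins down $\alpha = 0$, which rearranges to $-x^* \in \partial f(0)$ and $x^* \in \partial g(0)$, as required.

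The delicate step — and the principal obstacle — is excluding the degenerate case $\lambda = 0$, for otherwise the entire construction collapses. If $\lambda = 0$, then $x^* \neq 0$ and $\langle \cdot, x^* \rangle \leq \alpha$ on $\dom g$. But continuity of $g$ at $x_0$ guarantees that $\dom g$ contains an open ball around $x_0$, so $x^*$ is bounded above on an open set and must vanish, contradicting $(x^*,\lambda) \neq 0$. This is the one and only place where the continuity hypothesis is used in an essential way, and classical examples show that without it the equality $\partial(f+g) = \partial f + \partial g$ can indeed fail, which is why Rockafellar's constraint qualification cannot be dropped.
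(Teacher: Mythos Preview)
The paper does not supply its own proof of this fact; it is stated as Fact~\ref{f:F4} with references to Rockafellar, Simons, and Z\u{a}linescu, and then used as a black box. Your argument is the classical Hahn--Banach sandwich proof, which is precisely how the cited sources establish the result, and the overall architecture is correct.

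There is, however, one misstated step in your treatment of the degenerate case $\lambda=0$. You claim that a linear functional bounded above on a nonempty open set must vanish, but this is false: any nonzero $x^*\in X^*$ is bounded above on every bounded open set. What actually forces $x^*=0$ is the conjunction of \emph{both} separating inequalities. From the $C_2$ side you get $\langle y,x^*\rangle\leq\alpha$ on an open ball about the translated continuity point $x_0':=x_0-x$; from the $C_1$ side you get $\langle y,x^*\rangle\geq\alpha$ on $\dom f$, and in particular at $y=x_0'$ since $x_0'\in\dom f$. Together these pin down $\langle x_0',x^*\rangle=\alpha$, and then $\langle x_0'\pm w,x^*\rangle\leq\alpha$ for all small $w$ yields $\langle w,x^*\rangle=0$, hence $x^*=0$. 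Equivalently, when $\lambda=0$ the hyperplane $\{y:\langle y,x^*\rangle=\alpha\}$ would separate $\dom f$ from $\dom g$, which is impossible because $x_0'\in\dom f\cap\inte\dom g$. With this repair the proof is complete.
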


 \begin{fact}[Br\o ndsted-Rockafellar]\label{f:FJB5}
\emph{(See  \cite[Theorem~3.1.2 or Theorem~3.1.4(ii)]{Zalinescu}.)}
Let $f: X\rightarrow\RX$ be a proper lower semicontinuous and convex function
and $x^*\in\dom f^*$.
Then
there exists   a
sequence $(x_n,x^*_n)_{n\in\NN}$ in $\gra \partial f$ such that
$x^*_n \to x^*$.
\end{fact}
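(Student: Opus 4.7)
The plan is to derive this as a direct consequence of Ekeland's variational principle applied to the shifted function $g := f - \langle \cdot\,, x^*\rangle$. Since $x^* \in \dom f^*$, we have $f^*(x^*) < +\infty$, and consequently $g(x) = f(x) - \langle x, x^*\rangle \geq -f^*(x^*)$ for every $x \in X$, so $g$ is proper, lower semicontinuous, convex, and bounded below on $X$. Moreover, the definition of $f^*$ as a supremum supplies, for each $n \in \NN$, a point $u_n \in X$ with $g(u_n) \leq \inf_X g + 1/n^2$.

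For each $n$, I would apply Ekeland's variational principle to $g$ with parameters $\varepsilon = 1/n^2$ and $\lambda = 1/n$, starting from $u_n$. This produces a point $v_n \in X$ such that
\[
g(v_n) \leq g(x) + \tfrac{1}{n}\|x - v_n\| \qquad \text{for every } x \in X,
\]
i.e., $v_n$ is a global minimizer of $g + \tfrac{1}{n}\|\cdot - v_n\|$, so $0 \in \partial\bigl(g + \tfrac{1}{n}\|\cdot - v_n\|\bigr)(v_n)$. Because $x \mapsto \tfrac{1}{n}\|x - v_n\|$ is continuous everywhere on $X$, Rockafellar's sum rule (Fact~\ref{f:F4}) applies and yields
\[
0 \in \partial g(v_n) + \tfrac{1}{n} B_{X^*} = \partial f(v_n) - x^* + \tfrac{1}{n} B_{X^*},
\]
using the standard identity $\partial \|\cdot - v_n\|(v_n) = B_{X^*}$.

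Consequently there exists $\eta^*_n \in B_{X^*}$ such that $x^*_n := x^* - \tfrac{1}{n}\eta^*_n \in \partial f(v_n)$; setting $x_n := v_n$ we obtain $(x_n, x^*_n) \in \gra \partial f$ with $\|x^*_n - x^*\| \leq 1/n$, whence $x^*_n \to x^*$, as required. The only place the hypothesis $x^* \in \dom f^*$ enters essentially is in guaranteeing that $g$ is bounded below, which is the sole nontrivial input to Ekeland's principle; the remaining ingredients (the sum rule and the subdifferential of the norm at the base point) are routine. The main technical care needed is verifying that the sum-rule hypothesis of Fact~\ref{f:F4} applies — continuity of one summand at a point in the common domain — which is automatic here since the translated norm is continuous on all of $X$ and $v_n \in \dom g = \dom f$.
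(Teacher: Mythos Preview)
Your proof is correct. Note, however, that the paper does not give its own proof of this statement: Fact~\ref{f:FJB5} is recorded as a citation to \cite[Theorem~3.1.2 or Theorem~3.1.4(ii)]{Zalinescu} with no accompanying argument, so there is nothing in the paper to compare against directly. That said, the argument you give --- shift by the linear functional to obtain a bounded-below lower semicontinuous convex function, apply Ekeland's variational principle with $\varepsilon = 1/n^2$ and $\lambda = 1/n$, then invoke the subdifferential sum rule (Fact~\ref{f:F4}) using continuity of the norm term --- is precisely the standard proof of the Br\o ndsted--Rockafellar theorem and is the route taken in the cited reference.
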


\begin{fact}[Attouch-Br\'ezis]\emph{(See \cite[Theorem~1.1]{AtBrezis}
 or \cite[Remark~15.2]{Si2}.)}\label{AttBre:1}
Let $f,g: X\rightarrow\RX$ be proper lower semicontinuous  and convex.
Assume that $
\bigcup_{\lambda>0} \lambda\left[\dom f-\dom g\right]$
is a closed subspace of $X$.
Then
\begin{equation*}
(f+g)^*(z^*) =\min_{y^*\in X^*} \left[f^*(y^*)+g^*(z^*-y^*)\right],\quad \forall z^*\in X^*.
\end{equation*}
\end{fact}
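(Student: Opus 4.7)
The plan is to recast the identity as strong Fenchel duality for a marginal convex function on $X$, and then use the closed-subspace qualification to force continuity of that function at the origin relative to the ambient closed subspace. The inequality
\[
(f+g)^*(z^*) \leq \inf_{y^* \in X^*}\bigl[f^*(y^*) + g^*(z^* - y^*)\bigr]
\]
is immediate from Fenchel--Young: adding $\scal{x}{y^*} \leq f(x) + f^*(y^*)$ and $\scal{x}{z^* - y^*} \leq g(x) + g^*(z^* - y^*)$, then taking $\sup_x$ and $\inf_{y^*}$, gives it. So only the reverse inequality and the attainment are at stake.

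Next I would reduce to $z^*=0$ by replacing $g$ with the shift $x\mapsto g(x)-\scal{x}{z^*}$ (which preserves $\dom g$, hence the qualification), and introduce the marginal function
\[
h(x) = \inf_{u \in X}\bigl[f(u) + g(u+x)\bigr],\qquad x \in X.
\]
A direct calculation shows $h$ is convex with $\dom h = \dom g - \dom f$, $h(0) = \inf_X(f+g) = -(f+g)^*(0)$, and $h^*(y^*) = f^*(-y^*) + g^*(y^*)$. After the substitution $y^* \mapsto -y^*$ the target identity becomes the conjunction $h(0)=h^{**}(0)$ together with attainment of $\inf_{y^*}h^*(y^*)$. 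Set $L := \bigcup_{\lambda>0}\lambda(\dom f - \dom g)$, which by hypothesis is a closed subspace of $X$ (hence a Banach space in its own right) that contains $\dom h$. A standard Baire-category / Ursescu open-mapping argument applied to the countable cover of $L$ by the closed convex sets $\overline{n\bigl[(\dom g \cap nB_X) - (\dom f \cap nB_X)\bigr]}$, $n \in \NN$, then yields constants $C,\delta > 0$ such that every $x \in \delta B_X \cap L$ admits a decomposition $x = v - u$ with $u \in \dom f$, $v \in \dom g$, and $\|u\| + \|v\| \leq C$. Consequently $h$ is bounded above on a neighborhood of $0$ in $L$, so $h|_L$ is continuous at $0$; in particular $h|_L(0) = (h|_L)^{**}(0)$ and $\partial(h|_L)(0) \neq \varnothing$ in $L^*$.

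Finally, any Hahn--Banach extension $y^* \in X^*$ of a subgradient in $\partial(h|_L)(0)$ lies in $\partial h(0)$ (because $\dom h \subseteq L$), whence $h(0) + h^*(y^*) = 0$, i.e.\ $f^*(-y^*) + g^*(y^*) = -h(0)$; this is the required minimizer, and undoing the translation recovers the identity for the original $z^*$. The main obstacle is precisely the Baire / open-mapping step that converts the abstract subspace qualification into the quantitative norm-controlled decomposition; once that Ursescu-type estimate is in place, the continuity of $h|_L$ at the origin — and with it the exact sum formula and the attainment — is routine.
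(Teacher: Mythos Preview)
The paper does not supply its own proof of this Fact; it is quoted as a known result due to Attouch and Br\'ezis (with a parallel reference to Simons), and is used only as a tool in the proof of Fact~\ref{Rea:1}\ref{Th:32}. There is therefore nothing in the paper to compare your argument against.

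That said, your outline is exactly the standard proof one finds in the cited sources: reduce to $z^*=0$, introduce the perturbation function $h(x)=\inf_{u}\bigl[f(u)+g(u+x)\bigr]$, use Baire category in the closed subspace $L$ to force local boundedness (hence continuity) of $h|_L$ at the origin, and then extract a subgradient via Hahn--Banach to obtain simultaneously the exact conjugate formula and the attainment. One small point worth making explicit: the continuity/subgradient step presupposes $h(0)>-\infty$, so the degenerate case $(f+g)^*(z^*)=+\infty$ should be disposed of separately --- there the easy inequality already forces both sides to equal $+\infty$, and the ``$\min$'' is attained vacuously.
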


\begin{fact}[Simons and Z\u{a}linescu]
\emph{(See \cite[Theorem~4.2]{SiZ} or \cite[Theorem~16.4(a)]{Si2}.)}\label{F4}
Let $Y$ be a real Banach space and $F_1, F_2\colon X\times Y \to \RX$ be proper,
lower semicontinuous, and convex. Assume that
for every $(x,y)\in X\times Y$,
\begin{equation*}(F_1\Box_2 F_2)(x,y)>-\infty
\end{equation*}
and that  $\bigcup_{\lambda>0} \lambda\left[P_X\dom F_1-P_X\dom F_2\right]$
is a closed subspace of $X$. Then for every $(x^*,y^*)\in X^*\times Y^*$,
\begin{equation*}
(F_1\Box_2 F_2)^*(x^*,y^*)=\min_{u^*\in X^*}
\left[F_1^*(x^*-u^*,y^*)+F_2^*(u^*,y^*)\right].
\end{equation*}\end{fact}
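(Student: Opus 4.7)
The plan is to reduce the claim to the additive Attouch--Br\'ezis formula (Fact~\ref{AttBre:1}) by lifting $F_1$ and $F_2$ to the product space $X\times Y\times Y$, where the partial inf-convolution becomes a simple image under a continuous linear map. Define $G_1,G_2:X\times Y\times Y\to\RX$ by
\[ G_1(x,u,v):=F_1(x,u),\qquad G_2(x,u,v):=F_2(x,v); \]
both are proper, lower semicontinuous, and convex. Let $T:X\times Y\times Y\to X\times Y$ be the continuous linear surjection $T(x,u,v):=(x,u+v)$, whose adjoint is $T^*(x^*,y^*)=(x^*,y^*,y^*)$. Unfolding the definition of $\Box_2$ yields
\[ (F_1\Box_2 F_2)(x,y)=\inf\bigl\{(G_1+G_2)(x',u,v):T(x',u,v)=(x,y)\bigr\}, \]
so $F_1\Box_2 F_2$ is exactly the image of $G_1+G_2$ under $T$.

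Because $\dom G_1-\dom G_2=[P_X\dom F_1-P_X\dom F_2]\times Y\times Y$, the hypothesis that $\bigcup_{\lambda>0}\lambda[P_X\dom F_1-P_X\dom F_2]$ is a closed subspace of $X$ lifts verbatim to the Attouch--Br\'ezis qualification for $(G_1,G_2)$ on $X\times Y\times Y$. Thus Fact~\ref{AttBre:1} delivers
\[ (G_1+G_2)^*(x^*,u^*,v^*)=\min_{(a^*,b^*,c^*)}\!\bigl[G_1^*(a^*,b^*,c^*)+G_2^*(x^*-a^*,u^*-b^*,v^*-c^*)\bigr]. \]
A direct supremum computation (the variables $v$ in $G_1$ and $u$ in $G_2$ are unconstrained) gives
\[ G_1^*(a^*,b^*,c^*)=F_1^*(a^*,b^*)+\iota_{\{0\}}(c^*),\qquad G_2^*(a^*,b^*,c^*)=F_2^*(a^*,c^*)+\iota_{\{0\}}(b^*), \]
so the constraints $c^*=0$ and $b^*=u^*$ are forced, and the display collapses to
\[ (G_1+G_2)^*(x^*,u^*,v^*)=\min_{a^*\in X^*}\bigl[F_1^*(a^*,u^*)+F_2^*(x^*-a^*,v^*)\bigr]. \]

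The final step is the elementary image-conjugate identity: for any $H:X\times Y\times Y\to\RX$, the exchange of suprema
\[ \sup_{(x,y)}\!\Bigl[\langle(x,y),(x^*,y^*)\rangle-\!\!\!\inf_{T(x',u,v)=(x,y)}\!\!\!H(x',u,v)\Bigr]=\sup_{(x',u,v)}\bigl[\langle T(x',u,v),(x^*,y^*)\rangle-H(x',u,v)\bigr] \]
shows that the conjugate of the $T$-image of $H$ equals $H^*\circ T^*$. Taking $H=G_1+G_2$ and using $T^*(x^*,y^*)=(x^*,y^*,y^*)$ produces
\[ (F_1\Box_2 F_2)^*(x^*,y^*)=(G_1+G_2)^*(x^*,y^*,y^*)=\min_{a^*\in X^*}\bigl[F_1^*(a^*,y^*)+F_2^*(x^*-a^*,y^*)\bigr], \]
and the substitution $u^*:=x^*-a^*$ delivers exactly the stated formula. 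The one delicate point is preserving an \emph{attained} minimum rather than a mere infimum in the final display; this is inherited directly from the minimum supplied by Fact~\ref{AttBre:1}. The assumption $(F_1\Box_2 F_2)>-\infty$ enters precisely to ensure that the image of $G_1+G_2$ under $T$ is $\RX$-valued, so that the conjugation manipulations remain legitimate and we are not passing through a degenerate $-\infty$-valued function.
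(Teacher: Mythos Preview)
The paper does not supply its own proof of this Fact; it is quoted from the literature (Simons--Z\u{a}linescu \cite{SiZ} and Simons \cite{Si2}) and used as a black box. So there is no in-paper argument to compare against.

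Your reduction to Attouch--Br\'ezis via the lift to $X\times Y\times Y$ is correct and is in fact a standard way to derive the partial inf-convolution formula from the ordinary sum rule. The verification that $\dom G_1-\dom G_2=[P_X\dom F_1-P_X\dom F_2]\times Y\times Y$ is right (the free $v$-slot in $G_1$ and free $u$-slot in $G_2$ sweep out the full $Y$-factors), so the qualification condition transfers exactly. The conjugate computations for $G_1^*$ and $G_2^*$ are correct, the image--conjugate identity $(TH)^*=H^*\circ T^*$ is elementary as you wrote it, and the attained minimum is indeed inherited from Fact~\ref{AttBre:1}.

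One small expository point: the hypothesis $(F_1\Box_2 F_2)>-\infty$ is not actually needed to justify the conjugate identity you use, since $(TH)^*=H^*\circ T^*$ holds for any $H$ with values in $\RX$, and $G_1+G_2$ is automatically proper once the qualification condition forces $P_X\dom F_1\cap P_X\dom F_2\neq\varnothing$ (a closed subspace contains $0$, hence $0\in P_X\dom F_1-P_X\dom F_2$). The $>-\infty$ assumption is there in the original statement to keep $F_1\Box_2 F_2$ a bona fide $\RX$-valued function, but your argument goes through without invoking it.
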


The following result was first established in
\cite[Theorem~7.4]{Borwein}.
Now we give a new proof.
\begin{fact}[Borwein]\label{LinAdSum}
Let $A, B:X\rightrightarrows X^*$ be linear relations such that $\gra A$ and $\gra B$ are closed.
Assume that
$\dom A-\dom B$ is closed. Then
\begin{align*}(A+B)^*=A^*+B^*.\end{align*}
\end{fact}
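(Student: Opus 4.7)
The plan is to translate the sum formula for adjoints of closed linear relations into a partial inf-convolution identity for indicator functions, and then invoke Fact~\ref{F4} (Simons--Z\u{a}linescu).

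The easy inclusion $A^{*}+B^{*}\subseteq(A+B)^{*}$ would be checked directly from the definitions and uses no hypothesis: if $(y^{**},z^{*})\in\gra A^{*}$ and $(y^{**},w^{*})\in\gra B^{*}$, then $\langle x,z^{*}\rangle=\langle u^{*},y^{**}\rangle$ for every $(x,u^{*})\in\gra A$ and $\langle x,w^{*}\rangle=\langle v^{*},y^{**}\rangle$ for every $(x,v^{*})\in\gra B$; summing over any decomposition in $\gra(A+B)$ yields $(y^{**},z^{*}+w^{*})\in\gra(A+B)^{*}$.

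For the nontrivial inclusion, I would set $F_{A}:=\iota_{\gra A}$ and $F_{B}:=\iota_{\gra B}$ on $X\times X^{*}$. Both are proper, convex, and lower semicontinuous since $\gra A$ and $\gra B$ are closed subspaces. Unwinding the infimum directly gives $F_{A}\,\Box_{2}\,F_{B}=\iota_{\gra(A+B)}\ge 0$. Since $\gra A,\gra B$ are subspaces, $P_{X}\dom F_{A}=\dom A$ and $P_{X}\dom F_{B}=\dom B$ are subspaces of $X$, whence
$$\bigcup_{\lambda>0}\lambda\bigl[P_{X}\dom F_{A}-P_{X}\dom F_{B}\bigr]=\dom A-\dom B,$$
which is closed by hypothesis. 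Fact~\ref{F4} therefore applies, and, recalling that for any closed subspace $S\subseteq X\times X^{*}$ one has $\iota_{S}^{*}=\iota_{S^{\bot}}$ on $X^{*}\times X^{**}$, the conjugation identity it provides becomes: for every $(x^{*},y^{**})\in X^{*}\times X^{**}$ there exists $u^{*}\in X^{*}$ with
$$\iota_{(\gra(A+B))^{\bot}}(x^{*},y^{**})=\iota_{(\gra A)^{\bot}}(x^{*}-u^{*},y^{**})+\iota_{(\gra B)^{\bot}}(u^{*},y^{**}).$$

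To finish, I would substitute $y^{**}\mapsto-y^{**}$ and invoke the definition of the adjoint $(z^{**},z^{*})\in\gra C^{*}\iff(z^{*},-z^{**})\in(\gra C)^{\bot}$; the last display then says that $(y^{**},x^{*})\in\gra(A+B)^{*}$ if and only if there is $u^{*}\in X^{*}$ with $(y^{**},x^{*}-u^{*})\in\gra A^{*}$ and $(y^{**},u^{*})\in\gra B^{*}$, i.e.\ $(y^{**},x^{*})\in\gra(A^{*}+B^{*})$. The main conceptual hurdle is recognising that the adjoint sum rule for closed linear relations is exactly the partial inf-convolution duality packaged by Fact~\ref{F4}; once this correspondence is in place, the closedness of $\dom A-\dom B$ matches the constraint qualification verbatim and no further analytic work is needed.
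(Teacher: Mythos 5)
Your proposal is correct and takes essentially the same route as the paper: both identify $\iota_{\gra(A+B)}$ with the partial inf-convolution $\iota_{\gra A}\Box_2\iota_{\gra B}$, apply Fact~\ref{F4} under the constraint qualification that $\dom A-\dom B$ is closed, and translate the resulting exact conjugate formula back through $\iota_S^*=\iota_{S^\perp}$ and the definition of the adjoint. Your separate verification of the easy inclusion $A^*+B^*\subseteq(A+B)^*$ is harmless but redundant, since the attained minimum in Fact~\ref{F4} already yields both inclusions at once.
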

\begin{proof}
We have
\begin{align}
\iota_{\gra (A+B)}=\iota_{\gra A}\Box_2\iota_{\gra B}.\label{LinAdSumP:1}
\end{align}
Let $(x^{**},x^*)\in X^{**}\times X^*$.
Since
$\gra A$ and $\gra B$ are closed convex,
$\iota_{\gra A}$ and  $\iota_{\gra B}$ are proper lower semicontinuous and convex.
Then by Fact~\ref{F4} and \eqref{LinAdSumP:1},
 there exists $y^*\in X^*$ such that
\begin{align}
\iota_{\gra(A+B)^*}(x^{**},x^*)&=\iota_{\big(\gra (A+B)\big)^{\bot}}(-x^*,x^{**})\nonumber\\
&=\iota^*_{\gra (A+B)}(-x^*,x^{**})\quad\text{(since $\gra (A+B)$ is a subspace)}\nonumber\\
&=
\iota^*_{\gra A}(y^*,x^{**})+\iota^*_{\gra B}(-x^*-y^*,x^{**})\nonumber\\
&=\iota_{(\gra A)^{\bot}}(y^*,x^{**})+\iota_{(\gra B)^{\bot}}(-x^*-y^*,x^{**})\nonumber\\
&=\iota_{\gra A^*}(x^{**},-y^*)+\iota_{\gra B^*}(x^{**},x^*+y^*)\nonumber\\
&=\iota_{\gra (A^*+B^*)}(x^{**},x^*).\label{LinAdSumP:2}
\end{align}
Then we have $\gra (A+B)^*=\gra (A^*+B^*)$ and hence $(A+B)^*=A^*+B^*$.
\end{proof}

\begin{fact}[Simons]
\emph{(See \cite[Lemma~19.7 and Section~22]{Si2}.)}
\label{f:referee}
Let $A:X\rightrightarrows X^*$ be a monotone operator such
 that $\operatorname{gra} A$ is convex with $\operatorname{gra} A
\neq\varnothing$.
Then the function
\begin{equation}
g\colon X\times X^* \rightarrow \left]-\infty,+\infty\right]\colon
(x,x^*)\mapsto \langle x, x^*\rangle + \iota_{\operatorname{gra} A}(x,x^*)
\end{equation}
is proper and convex.
\end{fact}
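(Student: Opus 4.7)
The plan is to verify the two assertions directly from the definition; no deep functional-analytic machinery is needed.

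\emph{Properness.} Since $\operatorname{gra} A \neq \varnothing$, picking any $(x_0,x_0^*) \in \operatorname{gra} A$ gives $g(x_0,x_0^*) = \langle x_0, x_0^*\rangle \in \mathbb{R}$, so $\operatorname{dom} g \neq \varnothing$. Off the graph $g$ equals $+\infty$, and on the graph it takes the finite value $\langle x, x^*\rangle$, so $g > -\infty$ everywhere. This is immediate and requires no hypothesis beyond $\operatorname{gra} A \neq \varnothing$.

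\emph{Convexity.} Since $g \equiv +\infty$ outside $\operatorname{gra} A$, it suffices to verify the convexity inequality on $\operatorname{gra} A$. Fix $(x_1,x_1^*),(x_2,x_2^*) \in \operatorname{gra} A$ and $\lambda \in [0,1]$. By convexity of $\operatorname{gra} A$, the midpoint $\lambda(x_1,x_1^*)+(1-\lambda)(x_2,x_2^*)$ again lies in $\operatorname{gra} A$, so the indicator contributes $0$ on both sides of the desired inequality. The remaining task is the purely algebraic claim
\begin{equation*}
\bigl\langle \lambda x_1 + (1-\lambda)x_2,\; \lambda x_1^* + (1-\lambda)x_2^* \bigr\rangle
\;\leq\; \lambda \langle x_1,x_1^*\rangle + (1-\lambda)\langle x_2,x_2^*\rangle.
\end{equation*}
Expanding the bilinear left-hand side and collecting terms, the difference (right minus left) simplifies to $\lambda(1-\lambda)\,\langle x_1-x_2,\, x_1^*-x_2^*\rangle$, which is nonnegative precisely because $A$ is monotone. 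This is the one spot where the monotonicity hypothesis is actually used.

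The only potential pitfall is to resist the temptation to argue that ``$\langle x,x^*\rangle$ is bilinear, hence convex,'' which is false in general; the conclusion genuinely requires restricting to the monotone set $\operatorname{gra} A$. Aside from this, the proof is a one-line algebraic identity plus an appeal to the monotonicity inequality.
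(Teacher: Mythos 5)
Your proof is correct. The paper states this as a Fact and cites Simons \cite[Lemma~19.7 and Section~22]{Si2} without reproducing a proof, and your argument is the standard one: properness is immediate from $\gra A\neq\varnothing$, and convexity reduces, via convexity of $\gra A$, to the identity
\begin{equation*}
\lambda \scal{x_1}{x_1^*}+(1-\lambda)\scal{x_2}{x_2^*}-\Scal{\lambda x_1+(1-\lambda)x_2}{\lambda x_1^*+(1-\lambda)x_2^*}=\lambda(1-\lambda)\scal{x_1-x_2}{x_1^*-x_2^*}\geq 0,
\end{equation*}
which is exactly where monotonicity enters. Nothing is missing.
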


We also recall the somewhat more precise version of Theorem \ref{thm:brbr}.

\begin{fact}[Br\'ezis and Browder]  \label{Sv:7}\emph{(See \cite[Theorem~2]{Brezis-Browder},
 or \cite{Brezis70,BB76,Si3,Yao}.)}
Suppose that $X$ is reflexive.
Let $A\colon X \To X^*$ be a monotone linear relation
such that $\gra A$ is closed. Then the following are equivalent.
\begin{enumerate}
\item
$A$ is maximally monotone.
\item
$A^*$ is maximally monotone.
\item
$A^*$ is monotone.
\end{enumerate}
\end{fact}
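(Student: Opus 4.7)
The plan is to establish the equivalences via the pair of implications $(\mathrm{iii})\Rightarrow(\mathrm{i})$ and $(\mathrm{i})\Rightarrow(\mathrm{iii})$, then to deduce $(\mathrm{iii})\Rightarrow(\mathrm{ii})$ by applying the former to $A^*$; the remaining $(\mathrm{ii})\Rightarrow(\mathrm{iii})$ is immediate since maximal monotonicity implies monotonicity.

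For the core step $(\mathrm{iii})\Rightarrow(\mathrm{i})$, I would invoke Rockafellar's characterization: in reflexive $X$, a monotone operator is maximally monotone if and only if $\ran(A+J_X)=X^*$, where $J_X$ denotes the duality map (the subdifferential of $\tfrac12\|\cdot\|^2$). For a fixed $y^*\in X^*$, I would produce a preimage $\bar x\in X$ with $y^*\in A\bar x+J_X\bar x$ by minimizing the proper, convex, coercive, lower semicontinuous function
\[
\Phi(x,x^*) \;=\; \iota_{\gra A}(x,x^*) + \tfrac12\|x\|^2 + \tfrac12\|y^*-x^*\|^2
\]
on the reflexive product $X\times X^*$. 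Coercivity, weak lower semicontinuity, and reflexivity supply a minimizer $(\bar x,\bar x^*)\in\gra A$. To read off the first-order optimality condition as $y^*-\bar x^*\in J_X\bar x$, I would combine Fact~\ref{LinAdSum} (Borwein), which yields $(A+J_X)^*=A^*+J_{X^*}$ because $\dom A-\dom J_X=X$ is closed, with Fact~\ref{AttBre:1} (Attouch-Br\'ezis) to compute the Fenchel conjugate of the sum. The hypothesis (iii) enters through the monotonicity of $A^*+J_{X^*}$, which excludes a duality gap and identifies the critical point with a genuine solution.

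For $(\mathrm{i})\Rightarrow(\mathrm{iii})$, I would use a Yosida-approximation limit. Given $(u,u^*)\in\gra A^*$ with $u\in X$, pick any $u_0^*\in J_Xu$. By (i) and Rockafellar, for each $\lambda>0$ there exist $x_\lambda\in X$, $a_\lambda^*\in Ax_\lambda$, $j_\lambda^*\in J_Xx_\lambda$ with $a_\lambda^*+\lambda j_\lambda^*=\lambda u_0^*$. Monotonicity of $A$ gives $\|x_\lambda\|^2\leq\langle x_\lambda,u_0^*\rangle\leq\|x_\lambda\|\|u\|$, hence $\|x_\lambda\|\leq\|u\|$, and the adjoint identity $\langle a,u^*\rangle=\langle u,a^*\rangle$ applied to $(x_\lambda,a_\lambda^*)\in\gra A$ yields
\[
\langle x_\lambda,u^*\rangle \;=\; \langle u,a_\lambda^*\rangle \;=\; \lambda\bigl(\|u\|^2-\langle u,j_\lambda^*\rangle\bigr) \;\geq\; \lambda\|u\|\bigl(\|u\|-\|x_\lambda\|\bigr) \;\geq\; 0.
\]
Standard Yosida theory in reflexive $X$ yields $x_\lambda\to u$ as $\lambda\to\infty$ (weakly along a subnet, which suffices since $u^*\in X^*$ is weakly continuous), and passing to the limit gives $\langle u,u^*\rangle\geq 0$. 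Then $(\mathrm{iii})\Rightarrow(\mathrm{ii})$ follows by applying the established $(\mathrm{iii})\Rightarrow(\mathrm{i})$ with $A$ replaced by $A^*$: indeed $A^*$ is a monotone linear relation with closed graph, and its biadjoint $(A^*)^*=A^{**}=A$ is monotone, using Fact~\ref{rudin:1} and reflexivity to compute $\gra A^{**}=(\gra A)^{\perp\perp}=\overline{\gra A}=\gra A$.

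The main obstacle is the duality computation in $(\mathrm{iii})\Rightarrow(\mathrm{i})$: one must show that the infimum of $\Phi$ coincides with the supremum of its Fenchel dual and that both are attained, so that the optimality condition is sharp. The closedness of $\gra A$, the full domain of $J_X$, and Facts~\ref{AttBre:1} and~\ref{LinAdSum} together supply the required qualification conditions; the hypothesis that $A^*$ is monotone is precisely what prevents the dual problem from degenerating. A secondary technical point is the weak convergence of the Yosida resolvent $x_\lambda$ to $u$ in general reflexive $X$; this is standard but relies on bounding $x_\lambda$ uniformly and using a monotone-operator argument to identify the weak limit.
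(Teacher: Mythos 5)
Your overall architecture is fine --- proving (iii)$\Rightarrow$(i) as the core step, getting (i)$\Rightarrow$(iii) separately, and then deducing (iii)$\Rightarrow$(ii) by applying the core step to $A^*$ via $(A^*)^*=A$ is exactly how the paper organizes its corollary (it obtains (i)$\Leftrightarrow$(iii) from Theorem~\ref{BrBrTD:1} together with Fact~\ref{ITPR}, and handles (iii)$\Rightarrow$(ii) with the same biadjoint trick you use). But your execution of (iii)$\Rightarrow$(i) has a genuine gap. First, Fact~\ref{LinAdSum} applies only to \emph{linear relations} with closed graphs; $J_X$ is not a linear relation (outside Hilbert space), so the identity $(A+J_X)^*=A^*+J_{X^*}$ is not available --- indeed the paper never even defines an adjoint for a nonlinear operator. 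Second, and more seriously, the optimality condition for your uncoupled functional $\Phi$ does not close: minimizing $\tfrac12\|x\|^2+\tfrac12\|y^*-x^*\|^2$ over the closed subspace $\gra A$ yields $v^*\in J_X\bar x$ and $v\in J_{X^*}(\bar x^*-y^*)$ with $(-v,v^*)\in\gra A^*$, and monotonicity of $A^*$ gives only $\langle v,v^*\rangle\leq 0$; it is not clear how to extract $y^*-\bar x^*\in J_X\bar x$ from this, and the phrase ``excludes a duality gap and identifies the critical point with a genuine solution'' is doing all the work without an argument. The proofs that succeed (including the paper's Proposition~\ref{PrTD:1}, which is the real content behind Theorem~\ref{BrBrTD:1}) minimize a functional containing the \emph{coupling term} $\langle x,x^*\rangle$, namely $F_{(z,z^*)}(x,x^*)+\tfrac12\|x\|^2+\tfrac12\|x^*\|^2$ where $(z,z^*)$ is monotonically related to $\gra A$; convexity comes from Fact~\ref{f:referee}, the monotone-relatedness gives $\inf\geq 0$, and the monotonicity of $A^*$ is applied to the $(\gra A)^\perp$-component of the (approximate) subgradients to force the minimizing sequence to collapse to $(z,z^*)$ --- this proves maximality directly, with no appeal to Rockafellar's surjectivity theorem (which, incidentally, requires a renorming making $X$ and $X^*$ strictly convex for the ``surjective $\Rightarrow$ maximal'' direction you invoke).

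Your (i)$\Rightarrow$(iii) also leans on an unjustified claim: ``standard Yosida theory'' does not give $x_\lambda\rightharpoonup u$ as $\lambda\to\infty$ for an arbitrary $u\in\dom A^*$. Resolvent-type limits converge to (generalized) projections onto $\overline{\dom A}$, so you would first need $u\in\overline{\dom A}$ --- true here, but only because maximality forces $\overline{\dom A^*}=\overline{\dom A}$ (Proposition~\ref{linear}\ref{sia:3iv}), which you never establish --- and even then identifying the weak limit as $u$ rather than some other point of $J_X^{-1}u_0^*\cap\overline{\dom A}$ requires a real argument in a general reflexive space. This machinery is unnecessary: the paper's Proposition~\ref{f:PheSim}\ref{f:PheSim:brah:1} (taken from Br\'ezis--Browder's original proof) does it in two lines. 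If $(u,u^*)\in\gra A^*$ had $\langle u,u^*\rangle<0$, then for every $(y,y^*)\in\gra A$ one computes $\langle -u-y,u^*-y^*\rangle=-\langle u,u^*\rangle+\langle y,y^*\rangle>0$, so $(-u,u^*)$ is monotonically related to $\gra A$; maximality puts $(-u,u^*)\in\gra A$, and plugging $(y,y^*)=(-u,u^*)$ into the same identity gives $0>0$, a contradiction. You should replace both of your heavy analytic steps accordingly: the translated coupled minimization for (iii)$\Rightarrow$(i), and the monotonically-related computation for (i)$\Rightarrow$(iii).
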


This has a recent non-reflexive counterpart:

\begin{fact}\emph{(See \cite[Theorem~3.1]{BBWY:1}.)}
\label{TypeD:1}
Let $A:X\rightrightarrows X^*$ be a maximally monotone linear relation.
Then the following are equivalent.
\begin{enumerate}
\item $A$ is of type (D).

\item  $A$ is of type (NI).

\item  $A$ is of type (FP).
\item $A^*$ is monotone

\end{enumerate}
\end{fact}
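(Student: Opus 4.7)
The plan is to reduce the theorem to a single substantial implication. Each of conditions (i), (ii), (iii) explicitly requires $A$ to be maximally monotone, so for these three Fact~\ref{TypeD:1} directly yields their mutual equivalence and also the implication to (iv). Therefore the only non-trivial content is the converse: \emph{if $A$ is a monotone linear relation with $\gra A$ closed and $A^*$ is monotone, then $A$ is maximally monotone} (and hence of type (D), (NI), (FP) by Fact~\ref{TypeD:1}).

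To prove this, I would take an arbitrary pair $(z, z^*) \in X \times X^*$ that is monotonically related to $\gra A$ and aim to show $(z, z^*) \in \gra A$. By translating the graph (noting that $\gra A - (z, z^*)$ remains a closed monotone linear relation whose adjoint is again monotone up to an affine shift), one reduces to the case $(z, z^*) = (0, 0)$, so the problem becomes: given $\langle x, x^*\rangle \ge 0$ for all $(x, x^*) \in \gra A$, show $(0, 0) \in \gra A$.

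From here, the strategy I would pursue is to build the approximating net of type (D) directly, which by Fact~\ref{TypeD:1} will deliver maximal monotonicity as a byproduct. Because $\gra A$ is a closed subspace, Fact~\ref{f:referee} guarantees that
\begin{equation*}
g(x,x^*) = \langle x,x^*\rangle + \iota_{\gra A}(x,x^*)
\end{equation*}
is proper, lower semicontinuous, and convex. Computing its Fenchel conjugate on $X^* \times X^{**}$ and exploiting the duality $(C^{\bot})_{\bot} = \overline{C}$ from Fact~\ref{rudin:1} together with the identification $\gra A^* = \{(x^{**},x^*) : (x^*,-x^{**}) \in (\gra A)^{\bot}\}$, the hypothesis ``$A^*$ is monotone'' should translate into a sign/coercivity statement for $g^*$. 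Applying the Br\o ndsted--Rockafellar theorem (Fact~\ref{f:FJB5}) then yields subgradient sequences for $g$, and the linearity of $\gra A$ permits rescaling and averaging to extract a bounded net in $\gra A$ that weak$^*\times$strong converges to $(0,0) \in X^{**}\times X^*$. This is exactly type (D) at the origin for the translated relation, and untranslating delivers $(z,z^*) \in \gra A$, completing the proof of maximality; Fact~\ref{TypeD:1} then upgrades this to the full equivalence.

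The main obstacle is the non-reflexivity of $X$: unlike the classical proof of Theorem~\ref{thm:brbr}, one cannot identify $X^{**}$ with $X$, so an element arising in the bidual must genuinely be approximated from within $X \times X^*$. Exploiting the linearity of $\gra A$ -- which is also what the SSDB generalization \cite{Si3} ultimately relies upon -- is essential for the bidual-to-dual descent via bounded nets. A secondary delicate point is verifying the closedness-of-difference hypothesis in Attouch--Br\'ezis (Fact~\ref{AttBre:1}) or in the Simons--Z\u{a}linescu partial inf-convolution formula (Fact~\ref{F4}) needed to commute $*$ with sums of $\iota_{\gra A}$ and the coupling functional; this is precisely where the closedness of $\gra A$ together with Fact~\ref{LinAdSum} is expected to intervene.
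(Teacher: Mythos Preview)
Your proposal is circular: you invoke Fact~\ref{TypeD:1} several times in the course of proving Fact~\ref{TypeD:1}. In the first paragraph you write that ``for these three Fact~\ref{TypeD:1} directly yields their mutual equivalence and also the implication to (iv)'', and at the end of the second paragraph you again appeal to Fact~\ref{TypeD:1} to upgrade maximality to type (D). Both references are to the very statement you are supposed to establish.

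You also misread the hypothesis. In Fact~\ref{TypeD:1} the standing assumption is that $A$ is \emph{already} maximally monotone; this applies equally to item (iv). Thus the implication (iv)$\Rightarrow$(i) is ``$A$ maximally monotone and $A^*$ monotone $\Rightarrow$ $A$ is of type (D)'', not ``$A$ monotone with closed graph and $A^*$ monotone $\Rightarrow$ $A$ maximally monotone''. The latter is Proposition~\ref{PrTD:1}, a separate (and strictly stronger) result which the paper proves afterwards precisely \emph{using} Fact~\ref{TypeD:1} as an input. Your reduction ``by translating the graph \dots one reduces to the case $(z,z^*)=(0,0)$'' is also off target: translating $\gra A$ by a nonzero $(z,z^*)$ destroys linearity, so the translated relation is no longer a linear relation and the subsequent arguments about adjoints and Fact~\ref{f:referee} would need rephrasing.

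Finally, note that the paper does not supply a proof of Fact~\ref{TypeD:1} at all: it is quoted from \cite[Theorem~3.1]{BBWY:1} and used as a black box. If you want to reprove it independently, you must argue the equivalences (i)$\Leftrightarrow$(ii)$\Leftrightarrow$(iii) from the definitions (or from \cite{Si4,SiNI,MarSva,BBWY2}) and then show (i)$\Rightarrow$(iv) and (iv)$\Rightarrow$(i) under the maximal-monotonicity hypothesis, without appealing to the statement itself.
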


Comparing of Fact~\ref{TypeD:1} and Fact~\ref{Sv:7},
we observe that the
 hypothesis in the latter (maximality of $A$) is more restrictive than in
  the former (closedness of the graph).
  In \cite[Theorem~3.1]{BBWY:1} we were unable to attack this issue.
The result of the next section redresses our lacuna.

Now let us cite some basic properties of linear relations.

The following result appeared in
 Cross' book \cite{Cross}.  We give new proofs of \ref{Sia:1}--\ref{Th:32}.
 The proof of the \ref{Th:32} below
was adapted from \cite[Remark~2.2]{BWY8}.

\begin{fact}\label{Rea:1}
Let $A:X \rightrightarrows X^*$ be a linear relation.
Then the following hold.
\begin{enumerate}
\item \label{Th:28}$Ax=x^* +A0,\quad\forall x^*\in Ax.$
\item \label{Th:30}
$A(\alpha x+\beta y)=\alpha Ax+\beta Ay, \forall (\alpha,\beta)\in\RR^2\smallsetminus\{(0,0)\},\;
\forall x,y\in\dom A$.
\item \label{Sia:2b}
$\langle A^*x,y\rangle=\langle x,Ay\rangle$ is a singleton, $\forall x\in \dom A^*, \forall y\in\dom A$.
\item \label{Sia:1}$(\dom A)^{\bot}=A^*0$ is (weak$^*$) closed
 and $\overline{\dom A}=(A^*0)_{\perp}$.
\item \label{Sia:2} If $\gra A$ is  closed, then $(\dom A^*)_{\bot}=A0$ and
$\overline{\dom A^*}^{\wk}=(A0)^{\perp}$.
\item\label{Th:32} If $\dom A$ is closed, then $\dom A^*=(\bar{A}0)^{\bot}$
 and thus $\dom A^*$ is (weak$^*$) closed, where $\bar{A}$ is the linear
relation whose graph is the closure of the graph of $A$.
\end{enumerate}
\end{fact}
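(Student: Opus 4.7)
The plan is to work through the six items in order, invoking the subspace structure of $\gra A$ for \ref{Th:28}--\ref{Sia:2b}, Hahn--Banach duality combined with Fact~\ref{rudin:1} for \ref{Sia:1}--\ref{Sia:2}, and the open mapping theorem for \ref{Th:32}.

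For \ref{Th:28}, the argument is that if $x^*_1,x^*_2\in Ax$ then $(0,x^*_1-x^*_2)\in\gra A$ by subtraction, giving $x^*_1-x^*_2\in A0$, with the reverse inclusion immediate from addition. Item \ref{Th:30} is the bilinearity of the subspace $\gra A$, where the exclusion $(\alpha,\beta)\ne(0,0)$ is needed to avoid the degenerate identification $A0=\{0\}$. For \ref{Sia:2b} I would fix $x^*\in A^*x$ and $y^*\in Ay$; since $(x^*,-x)\in(\gra A)^{\bot}$, evaluation at $(y,y^*)$ already yields $\langle y,x^*\rangle=\langle y^*,x\rangle$. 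Singletonness of $\langle A^*x,y\rangle$ follows because any $z^*\in A^*0$ satisfies $(z^*,0)\in(\gra A)^{\bot}$, so $\langle y,z^*\rangle=0$ for every $y\in\dom A$; singletonness of $\langle x,Ay\rangle$ comes from the dual calculation, using $(0,b^*)\in\gra A$ for $b^*\in A0$.

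For \ref{Sia:1}, unwinding the adjoint gives $x^*\in A^*0\iff(x^*,0)\in(\gra A)^{\bot}\iff x^*\in(\dom A)^{\bot}$ directly, and Fact~\ref{rudin:1} supplies $\overline{\dom A}=(A^*0)_{\bot}$. For \ref{Sia:2}, one direction reuses the calculation of \ref{Sia:2b}. For the other, I would assume $x^*\notin A0$ and separate $(0,x^*)$ from the closed subspace $\gra A$ by a continuous linear functional represented by $(u^*,u^{**})\in(\gra A)^{\bot}$ with $\langle x^*,u^{**}\rangle\ne 0$; the condition $(u^*,u^{**})\in(\gra A)^{\bot}$ is by definition $(-u^{**},u^*)\in\gra A^*$, so $-u^{**}\in\dom A^*$ certifies $x^*\notin(\dom A^*)_{\bot}$. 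Fact~\ref{rudin:1} then gives $\overline{\dom A^*}^{\wk}=(A0)^{\bot}$.

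The crux is \ref{Th:32}. Because $(\gra A)^{\bot}=(\overline{\gra A})^{\bot}$ we have $A^*=\bar A^*$, and continuity of $P_X$ yields $\dom\bar A\subseteq\overline{\dom A}=\dom A$, so $\dom\bar A=\dom A$ is closed. Replacing $A$ with $\bar A$, it suffices to prove $\dom A^*=(A0)^{\bot}$ under the extra hypothesis that $\gra A$ is closed. The inclusion $\subseteq$ is \ref{Sia:2}. For $\supseteq$, fix $x^{**}\in(A0)^{\bot}$ and define $L\colon\dom A\to\RR$ by $L(a):=\langle a^*,x^{**}\rangle$ for any $a^*\in Aa$; well-definedness and linearity come from \ref{Th:28} and $x^{**}\in(A0)^{\bot}$. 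Once $L$ is shown to be continuous, Hahn--Banach will extend it to some $x^*\in X^*$ satisfying $\langle a,x^*\rangle=\langle a^*,x^{**}\rangle$ on $\gra A$, i.e.\ $(x^*,-x^{**})\in(\gra A)^{\bot}$, so $x^{**}\in\dom A^*$; weak$^*$ closedness of $\dom A^*$ is then automatic from the identity just proved. The main obstacle is exactly the continuity of $L$, which is where the hypothesis that $\dom A$ is closed earns its keep. I would obtain it from the open mapping theorem: $A0$ is closed in $X^*$ by \ref{Sia:2}, so $\{0\}\times A0$ is a closed subspace of the Banach space $\gra A$, and $(a,a^*)\mapsto a$ descends to a continuous linear bijection from $\gra A/(\{0\}\times A0)$ onto the Banach space $\dom A$. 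Its inverse is bounded, so for some $C>0$ every $a\in\dom A$ admits a representative $(a,a^*)\in\gra A$ with $\|a^*\|\leq C\|a\|$, whence $|L(a)|\leq C\|x^{**}\|\,\|a\|$, as required.
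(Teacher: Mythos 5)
Your proposal is correct, and items \ref{Th:28}--\ref{Sia:2} match the paper in substance: the paper simply cites Cross for \ref{Th:28}--\ref{Sia:2b}, and its proofs of \ref{Sia:1} and \ref{Sia:2} are the same annihilator computations you give (your separation argument for the reverse inclusion in \ref{Sia:2} is just the bipolar theorem, i.e.\ Fact~\ref{rudin:1}, unpacked by hand). The genuine divergence is in \ref{Th:32}. The paper, after the same reduction to $\bar A$, obtains $\dom \bar A^*=(\bar A0)^\perp$ in one line of conjugate calculus: it applies the Attouch--Br\'ezis theorem (Fact~\ref{AttBre:1}) to $\iota_{\gra \bar A}+\iota_{\{0\}\times X^*}$, whose constraint qualification $\bigcup_{\lambda>0}\lambda[\gra\bar A-\{0\}\times X^*]=\dom\bar A\times X^*$ is closed precisely because $\dom A$ is closed, yielding $\iota_{X^*\times(\bar A0)^\perp}=\iota_{(\gra\bar A)^\perp}\infconv\iota_{X^*\times\{0\}}=\iota_{X^*\times\dom\bar A^*}$. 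You instead run the classical functional-analytic argument: the bounded inverse theorem applied to $\gra\bar A/(\{0\}\times \bar A0)\to\dom \bar A$ gives a bounded selection $a\mapsto a^*$, hence continuity of $L(a)=\langle a^*,x^{**}\rangle$, and Hahn--Banach extension then produces the required $x^*$ with $(x^*,-x^{**})\in(\gra \bar A)^\perp$. Both arguments are correct and both consume the closedness of $\dom A$ at exactly one point (the constraint qualification, respectively the completeness of $\dom A$ needed for the open mapping theorem); the paper's is shorter given that Fact~\ref{AttBre:1} is already in its toolkit, while yours is self-contained and makes the role of the hypothesis more transparent. The only nitpick is that the quotient-norm infimum need not be attained, so your selection should read $\|a^*\|\leq (C+\varepsilon)\|a\|$ (or $2C\|a\|$ for $a\neq 0$), which changes nothing.
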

\begin{proof}
\ref{Th:28}: See \cite[Proposition I.2.8(a)]{Cross}.
 \ref{Th:30}: See \cite[Corollary I.2.5]{Cross}.
\ref{Sia:2b}: See \cite[Proposition III.1.2]{Cross}.

\ref{Sia:1}: We have
\begin{align*}
x^*\in A^*0\Leftrightarrow(x^*,0)\in(\gra A)^{\bot}
\Leftrightarrow x^*\in(\dom A)^{\bot}.
\end{align*}
 Hence $(\dom A)^{\bot}= A^*0$ and thus $A^*0$ is weak$^*$ closed.
 By Fact~\ref{rudin:1},
$\overline{\dom A}=(A^*0)_{\bot}$.

\ref{Sia:2}: Using Fact~\ref{rudin:1},  \begin{align*}
&x^*\in A0\Leftrightarrow (0,x^*)\in\gra A=
 \left[(\gra A)^{\bot}\right]_{\bot}=\left[\gra -(A^*)^{-1}\right]_{\bot}
 \Leftrightarrow x^*\in(\dom A^*)_{\bot}.
\end{align*}
Hence $(\dom A^*)_{\bot}=A0$ and thus, by Fact~\ref{rudin:1},
 $\overline{\dom A^*}^{\wk}=(A0)^{\bot}$.

 \ref{Th:32}:
Let $\bar{A}$ be the linear
relation whose graph is the closure of the graph of $A$.
Then $\dom A = \dom\bar{A}$ and $A^* = \bar{A}^*$.
Then by Fact~\ref{AttBre:1},
\begin{equation*}
\iota_{X^*\times (\bar{A}0)^\perp} =
\iota_{\{0\}\times \bar{A}0}^* =
\big(\iota_{\gra \bar{A}} + \iota_{\{0\}\times X^*}\big)^*
= \iota_{\gra (-\bar{A}^*)^{-1}} \infconv \iota_{X^*\times\{0\}}
= \iota_{X^*\times \dom \bar{A}^*}.
\end{equation*}
It is clear that $\dom A^*=\dom \bar{A}^* = (\bar{A}0)^\perp$ is
weak$^*$ closed, hence closed.
\end{proof}

\section{A key result}\label{s:tech}

The proof of Proposition~\ref{PrTD:1} below  was partially inspired by
 that of \cite[Theorem~3.1]{BBWY:1}.

\begin{proposition}\label{PrTD:1}
Let $A:X\rightrightarrows X^*$ be a monotone linear relation such that $\gra A$ is closed
and $A^*$ is monotone. Then $A$ is maximally monotone of type (D).
\end{proposition}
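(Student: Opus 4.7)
By Fact~\ref{TypeD:1}, once $A$ is known to be maximally monotone, the hypothesis that $A^*$ is monotone already upgrades this to type (D). So the real task is to establish maximal monotonicity. Let $(z,z^*)\in X\times X^*$ be monotonically related to $\gra A$; I aim to show $(z,z^*)\in\gra A$. Since $\gra A$ is a norm-closed linear subspace, it is also weakly closed by Mazur's theorem. It is therefore enough to produce a bounded net $(a_\alpha,a_\alpha^*)$ in $\gra A$ with $a_\alpha^*\to z^*$ in norm and $a_\alpha\weakly z$ in $X$; weak-by-weak closedness will then deliver $(z,z^*)\in\gra A$. (Equivalently, I will verify the type (D) approximation at $(z,z^*)\in X\times X^*$ viewed inside $X^{**}\times X^*$.)

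The production of the net goes through the Fitzpatrick function and the Br\o ndsted-Rockafellar theorem. Consider $g\colon X\times X^*\to\RX$ defined by $g(x,x^*)=\langle x,x^*\rangle+\iota_{\gra A}(x,x^*)$. By Fact~\ref{f:referee} and the closedness of $\gra A$, $g$ is proper, convex, and lower semicontinuous. Short computations then establish that $g^*(y^*,y^{**})=F_A(y^{**},y^*)$, the Fitzpatrick function of $A$, and (using the linear structure of $\gra A$ together with the definition of $A^*$ and the annihilator identities of Fact~\ref{Rea:1}) that
\begin{equation*}
(y^*,y^{**})\in\partial g(x,x^*)\quad\Longleftrightarrow\quad (x,x^*)\in\gra A\ \text{and}\ (x-y^{**},y^*-x^*)\in\gra A^*.
\end{equation*}
Monotonic relatedness of $(z,z^*)$ expands to give $g^*(z^*,z)=F_A(z,z^*)\le\langle z,z^*\rangle$, so $(z^*,z)\in\dom g^*$.

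Apply the quantitative form of the Br\o ndsted-Rockafellar theorem (Fact~\ref{f:FJB5} together with its standard refinement allowing simultaneous control of both variables) with ``dual target'' $(z^*,z)$ and approximating starting points $(x_0^{(n)},x_0^{*(n)})\in\gra A$ chosen so that $\langle x_0^{(n)},z^*\rangle+\langle x_0^{*(n)},z\rangle-\langle x_0^{(n)},x_0^{*(n)}\rangle\to F_A(z,z^*)$. One obtains $(x_n,x_n^*)\in\gra A$ and $(y_n^*,y_n^{**})\in\partial g(x_n,x_n^*)$ with $(y_n^*,y_n^{**})\to (z^*,z)$ in norm of $X^*\times X^{**}$. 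The monotonicity of $A^*$ applied to the pair $(x_n-y_n^{**},y_n^*-x_n^*)\in\gra A^*$, combined with the Fenchel-Young equality $\langle x_n,x_n^*\rangle+g^*(y_n^*,y_n^{**})=\langle x_n,y_n^*\rangle+\langle x_n^*,y_n^{**}\rangle$, collapses after cancellation to
\begin{equation*}
0\ \leq\ \langle x_n-y_n^{**},y_n^*-x_n^*\rangle\ =\ g^*(y_n^*,y_n^{**})-\langle y_n^{**},y_n^*\rangle.
\end{equation*}
Combined with lower semicontinuity of $g^*$ and the a priori bound $g^*(z^*,z)\le\langle z,z^*\rangle$, this forces $g^*(y_n^*,y_n^{**})\to\langle z,z^*\rangle$ and hence the $A^*$-monotone pairing above tends to $0$. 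Careful choice of the quantitative B-R parameters then extracts (possibly via a subnet and the weak$^*$ compactness of bounded subsets of $X^{**}$) a bounded net in $\gra A$ with $x_n^*\to z^*$ in norm and $x_n\weakly z$ in $X$.

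The principal obstacle is the final extraction: converting the scalar vanishing $\langle x_n-y_n^{**},y_n^*-x_n^*\rangle\to 0$ in the monotone cone $\gra A^*$, together with the norm convergence of $(y_n^*,y_n^{**})$, into coordinatewise convergence in the correct topologies. This requires exploiting linearity, the adjoint identities of Fact~\ref{Rea:1}, and the freedom in the quantitative B-R to control the primal error against the defect $\varepsilon_n$; all remaining ingredients (reduction to maximal monotonicity via Fact~\ref{TypeD:1}, properness and lsc of $g$, the subdifferential identification) are routine convex-analytic bookkeeping enabled by the Facts in Section~\ref{s:aux}.
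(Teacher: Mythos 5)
Your overall strategy---reduce to maximal monotonicity via Fact~\ref{TypeD:1}, apply Br\o ndsted--Rockafellar to a function built from $\langle\cdot,\cdot\rangle+\iota_{\gra A}$, and feed the resulting $(\gra A)^{\bot}$-components into the monotonicity of $A^*$---is the same as the paper's. But there is a genuine gap at precisely the point you yourself flag as ``the principal obstacle'', and the device the paper uses to close it is absent from your outline. The scalar vanishing $\langle x_n-y_n^{**},\,y_n^*-x_n^*\rangle\to 0$ for elements of the monotone linear relation $\gra A^*$ carries no coordinatewise information whatsoever: if $A^*$ happens to be skew on its graph, \emph{every} element pairs to zero, so this quantity cannot by itself force $x_n^*\to z^*$ or boundedness of $x_n$. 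Moreover, Fact~\ref{f:FJB5} controls only the dual variables, and the quantitative refinement you invoke controls $\|x_n-x_0^{(n)}\|$ relative to starting points $(x_0^{(n)},x_0^{*(n)})$ that form a maximizing sequence for the Fitzpatrick value at $(z,z^*)$---a sequence that need not be bounded. Your intermediate claim is also reversed: lower semicontinuity of $g^*$ gives $\liminf g^*(y_n^*,y_n^{**})\geq g^*(z^*,z)$, a \emph{lower} bound, while the conclusion $g^*(y_n^*,y_n^{**})\to\langle z,z^*\rangle$ needs an \emph{upper} bound, which via Fenchel--Young again requires the unproved boundedness of $(x_n,x_n^*)$.

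The paper escapes this circle by translating to $(z,z^*)$ and adding a coercive perturbation: it works with $G=F_{(z,z^*)}+\tfrac12\|\cdot\|^2+\tfrac12\|\cdot\|^2\geq 0$ and applies Br\o ndsted--Rockafellar at $(0,0)\in\dom G^*$. The duality-map terms $(v_n^*,v_n^{**})\in Ja_n\times J_{X^*}a_n^*$ produced by the sum rule then enter the $A^*$-monotonicity inequality alongside the sign condition $\langle a_n,a_n^*\rangle\geq 0$ (which comes from the monotone relatedness of $(z,z^*)$), and after estimating with $K_n=\max\{\|y_n^*\|,\|y_n^{**}\|\}\to 0$ one arrives at
\begin{equation*}
\big(\|a_n\|+\|a_n^*\|-4K_n\big)^2\leq 20K_n^2,
\end{equation*}
whence $\|a_n\|+\|a_n^*\|\to 0$ and $(z+a_n,z^*+a_n^*)\to(z,z^*)$ in norm; closedness of $\gra A$ finishes the proof, with no appeal to weak or weak$^*$ compactness at all. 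To repair your argument you should either adopt this regularization (it is what converts the single scalar inequality from $A^*$-monotonicity into genuine norm control) or supply an alternative mechanism for boundedness and coordinatewise convergence; as written, the final extraction step does not go through.
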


\begin{proof}
By Fact~\ref{TypeD:1}, it suffices to show that $A$ is maximally monotone.
Let $(z,z^*)\in X\times X^*$. Assume that
\begin{align}\text{$(z,z^*)$ is monotonically related to
$\gra A$.}\label{TIN:1}
\end{align}
Define
\begin{align*}
F:X\times X^*\to \RX\colon
(x,x^*)\mapsto \iota_{\gra A}(x,x^*)+\langle x,x^*\rangle.
\end{align*}
Fact~\ref{f:referee} implies that
$F$ is convex and since $\gra A$ is closed, $F$ is also proper, lower semicontinuous.
Recalling \eqref{e:timmy}, note that
\begin{align}
F_{(z,z^*)} \colon (x,x^*)
\mapsto \iota_{\gra A}(z+x,z^*+x^*)+\langle x,x^*\rangle \label{BrB:PSb1}
\end{align}
is proper, lower semicontinuous, and convex.
Set \begin{align}
G(x,x^*) :=
F_{(z,z^*)}(x,x^*)+ \tfrac{1}{2}\|x\|^2
+\tfrac{1}{2}\|x^*\|^2,\quad \forall (x,x^*)\in X\times X^*.\label{BrB:NPSb1}
\end{align}
Then
\begin{align}
\inf G =-G^*(0,0).
\label{BrB:PS1}
\end{align}
By \eqref{BrB:PSb1},  $\inf G\geq 0$. Then $(0,0)\in\dom G^*$.
By Fact~\ref{f:FJB5},  there exists a sequence
\begin{align}\big((a_n,a^*_n), (y^*_n, y^{**}_n)\big)_{n\in\NN}\;\text{in}\; \gra \partial G
\label{BrB:NPSb2}\end{align}
such that
   \begin{align}(y^*_n, y^{**}_n)\rightarrow (0,0).   \label{coer:1}\end{align}
 Thus,
\begin{align}K_n:=\max\big\{\|y^*_n\|, \|y^{**}_n\|\big\}\rightarrow0.
\label{coer:N1}\end{align}
By Fact~\ref{f:F4} and \eqref{BrB:NPSb2}, there exists $(v^*_n, v^{**}_n)\in Ja_n\times J_{X^*} a^*_n$ such that
\begin{align}(y^*_n, y^{**}_n)\in \partial F_{(z,z^*)}(a_n,a^*_n)+(v^*_n,
v^{**}_n),\quad \forall n \in\NN.
      \label{coer:N2}\end{align}
By \eqref{coer:N2}, \eqref{BrB:PSb1},
and \cite[Theorem 3.2.4(vi)\&(ii)]{Zalinescu},
 there exists a sequence
 $(z^*_n,z^{**}_n)_{n\in\NN}$ in
$(\gra A)^{\bot}$ such that
\begin{align}
(y^*_n, y^{**}_n)=({a^{*}_n},
{a_n})+(z^*_n,z^{**}_n)+(v^*_n, v^{**}_n),\quad \forall n\in\NN.\label{BrB:PS10}
\end{align}
Since $A^*$ is monotone and $(z^{**}_n,z^{*}_n)\in\gra(- A^*)$,
it follows from \eqref{BrB:PS10} that
\begin{align}&\negthinspace\negthinspace\negthinspace\negthinspace\langle
y^*_n, y^{**}_n\rangle+\langle a_n,a^*_n\rangle\nonumber\\ &\quad-\big[\langle y_n^*,
{a_n}\rangle+\langle y_n^{**}, {a^{*}_n}\rangle\big]-\big[\langle
y^*_n, v^{**}_n\rangle+\langle v^*_n,y^{**}_n\rangle\big]\nonumber\\&\quad+\langle a^*_n,v^{**}_n\rangle
+\langle v^*_n, v^{**}_n\rangle+\langle a_n,v^*_n\rangle\nonumber\\
&=\langle y^*_n-a^*_n-v^*_n, y^{**}_n-a_n-v^{**}_n\rangle\nonumber\\
&=\langle z^*_n,z^{**}_n\rangle\leq0, \quad \forall n\in\NN.\label{DPNo:1}\end{align}
Since  $(v^*_n, v^{**}_n)\in Ja_n\times J_{X^*} a^*_n$, by \eqref{DPNo:1}, we have
\begin{align}&\negthinspace\negthinspace\negthinspace\negthinspace\langle y^*_n, y^{**}_n\rangle+\langle a_n,a^*_n\rangle\nonumber\\&-\big[\|y_n^*\|\cdot
\|{a_n}\|+\|y_n^{**}\|\cdot \|{a^{*}_n}\|\big]-\big[
\|y^*_n\|\cdot\|a_n^{*}\|+\|a_n\|\cdot\|y^{**}_n\|\big]\nonumber\\
\quad&+\| a^*_n\|^2
-\|a_n\|\cdot\|a^*_n\|+\|a_n\|^2 \leq0, \quad \forall n\in\NN.\label{BrB:NPS10}\end{align}
Then by \eqref{BrB:NPS10} and \eqref{coer:N1},
\begin{align}&-K^2_n+\langle a_n,a^*_n\rangle-K_n\big[
\|{a_n}\|+\|{a^{*}_n}\|\big]-
K_n\big[\|a_n^{*}\|+\|a_n\|\big]\nonumber\\&\quad+\tfrac{1}{2}\big[\| a^*_n\|^2
+\|a_n\|^2\big] \leq0, \quad \forall n\in\NN.
\label{BrB:NPS11}\end{align}
Hence
\begin{align}\nonumber&-K^2_n+\langle a_n,a^*_n\rangle-2K_n\big[
\|{a_n}\|+\|{a^{*}_n}\|\big]+\tfrac{1}{4}\big[\| a^*_n\|
+\|a_n\|\big]^2 \leq0, \quad \forall n\in\NN.
\label{BrB:NPS12}\end{align}
Set $(x_n,x_n^*):=(z+{a_n},z^*+{a^{*}_n}),\quad \forall n\in\NN$.
Then by \eqref{BrB:PSb1}, we have
\begin{align}
  F_{(z,z^*)}({a_n},
{a^{*}_n})&=\iota_{\gra A}(z+{a_n},z^*
+{a^{*}_n})+\langle {a_n},
{a^{*}_n}\rangle\\&=
  \iota_{\gra A}(x_n,x^*_n)+\langle x_n-z,x^*_n-z^*\rangle.
  \label{TINf:5}
\end{align}
By \eqref{coer:N2} and \eqref{TINf:5},
\begin{align}
(x_n, x^*_n)\in\gra A,\quad \forall n\in\NN.  \label{TINf:6}
\end{align}
Then by \eqref{TINf:6} and   \eqref{TIN:1}, we have
\begin{align}
\langle{a_n},
{a^{*}_n}\rangle=\langle x_n-z,x^*_n-z^*\rangle\geq0,\quad \forall n\in\NN.
  \label{TINf:7}
\end{align}
Combining \eqref{BrB:NPS12} and \eqref{TINf:7},
\begin{align}\tfrac{1}{4}\big(\| a^*_n\|
+\|a_n\|\big)^2 \leq K^2_n+2K_n\big(
\|{a_n}\|+\|{a^{*}_n}\|\big), \quad \forall n\in\NN;
\end{align}
equivalently,
\begin{align}\big(\| a^*_n\|
+\|a_n\|-4K_n\big)^2 \leq 20K^2_n, \quad \forall n\in\NN.
\label{BrB:NPS13}\end{align}
In view of \eqref{coer:N1},
\begin{align}
\|a_n\|+\|a^*_n\|\rightarrow0.\label{TINf:3}
\end{align}
Thus
$(a_n,a^*_n)\to (0,0)$ and hence
$(x_n,x^*_n)\to (z,z^*)$.
Finally, by \eqref{TINf:6} and since $\gra A$ is closed, we see
$(z,z^*)\in\gra A$. Therefore, $A$ is maximally monotone.
\end{proof}

\begin{example}\label{SRE:1}
Let $A:X\rightrightarrows X^*$ be a monotone linear relation such that $\gra A$ is closed.
We note that we cannot guarantee
the maximal monotonicity of $A$ even if $A$ is at most single-valued and
densely defined.
To see this, suppose that  $X=\ell^2$,  and that
$A:\ell^2\rightrightarrows \ell^2$ is given by
\begin{align}Ax:=\frac{\bigg(\sum_{i< n}x_{i}-\sum_{i> n}x_{i}\bigg)_{n\in\NN}}{2}
=\bigg(\sum_{i< n}x_{i}+\tfrac{1}{2}x_n\bigg)_{n\in\NN},
\quad \forall x=(x_n)_{n\in\NN}\in\dom A,\label{EL:1}\end{align}
where $\dom A:=\Big\{ x:=(x_n)_{n\in\NN}\in \ell^{2}\mid \sum_{i\geq 1}x_{i}=0,
 \bigg(\sum_{i\leq n}x_{i}\bigg)_{n\in\NN}\in\ell^2\Big\}$.
 Then $A$ is an at most single-valued linear relation.
Now \cite[Propositions~3.6]{BWY7} states that
\begin{align}
\label{PF:a2}
A^*x= \bigg(\thalb x_n + \sum_{i> n}x_{i}\bigg)_{n\in\NN},
\end{align}
where
\begin{equation*}
x=(x_n)_{n\in\NN}\in\dom A^*=\bigg\{ x=(x_n)_{n\in\NN}\in \ell^{2}\;\; \bigg|\;\;
 \bigg(\sum_{i> n}x_{i}\bigg)_{n\in\NN}\in \ell^{2}\bigg\}.
\end{equation*}
 Moreover,
 \cite[Propositions~3.2, 3.5, 3.6 and 3.8]{BWY7}, \cite[Theorem~2.5]{PheSim} and Fact~\ref{Sv:7}
  show
that:
\begin{enumerate}
 \item\label{NEC:1}
 $A$ is maximally monotone and skew;
\item\label{NEC:2} $\dom A$ is dense and $\dom A\subsetneqq\dom A^*$;
\item\label{NEC:3} $A^*$ is  maximally monotone, but not skew;
\item\label{NEC:4} $-A$ is not maximally monotone.
\end{enumerate}
Hence, $-A$ is  monotone with dense domain and
$\gra (-A)$ is closed,
but nonetheless $-A$ is not maximally monotone. \endproof
\end{example}

\section{The general Br\'ezis-Browder theorem}
\label{s:main}

We may now pack everything together. For ease we repeat Theorem \ref{thm:bbwy}:

\begin{theorem}[Br\'ezis-Browder in general Banach space] \label{BrBrTD:1}
Let $A\colon X\rightrightarrows X^*$ be a monotone linear
 relation  such that $\gra A$ is closed.
Then the following are equivalent.
\begin{enumerate}

\item\label{MaT:1} $A$ is maximally monotone of type (D).

\item \label{MaT:2} $A$ is  maximally monotone  of type (NI).

\item \label{MaT:3} $A$ is  maximally monotone  of type (FP).
\item \label{MaT:03} $A^*$ is monotone.

\end{enumerate}
\end{theorem}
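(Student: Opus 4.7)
The plan is to observe that essentially all the work has already been done: Fact~\ref{TypeD:1} gives the equivalence of \ref{MaT:1}--\ref{MaT:03} \emph{under the additional standing hypothesis that $A$ is maximally monotone}, while Proposition~\ref{PrTD:1} is exactly what is needed to bridge from the weaker hypothesis ``$\gra A$ closed and $A^*$ monotone'' up to maximal monotonicity. So the proof is a short assembly of these two ingredients.

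Concretely, I would first note that each of \ref{MaT:1}, \ref{MaT:2}, \ref{MaT:3} includes maximal monotonicity of $A$ by definition, so Fact~\ref{TypeD:1} applies directly and yields the implications \ref{MaT:1}$\Rightarrow$\ref{MaT:2}$\Rightarrow$\ref{MaT:3}$\Rightarrow$\ref{MaT:03} together with the reverse implications \emph{whenever $A$ is already known to be maximally monotone}. The remaining nontrivial implication is \ref{MaT:03}$\Rightarrow$\ref{MaT:1}: this is precisely the statement of Proposition~\ref{PrTD:1}, whose hypotheses (monotone linear relation with closed graph, monotone adjoint) match exactly those of the theorem.

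Thus my write-up will be essentially two lines: ``\ref{MaT:03}$\Rightarrow$\ref{MaT:1} is Proposition~\ref{PrTD:1}. Since \ref{MaT:1}, \ref{MaT:2}, \ref{MaT:3} each entail maximal monotonicity of $A$, Fact~\ref{TypeD:1} yields \ref{MaT:1}$\Leftrightarrow$\ref{MaT:2}$\Leftrightarrow$\ref{MaT:3}$\Leftrightarrow$\ref{MaT:03}.'' There is no real obstacle at this stage of the paper because the genuine analytic content, namely deducing maximal monotonicity from monotonicity of the adjoint via a Br\o{}ndsted--Rockafellar plus Fitzpatrick-function approximation argument, has already been carried out in Section~\ref{s:tech}. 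The only thing one has to be mildly careful about is not invoking Fact~\ref{TypeD:1} to prove \ref{MaT:03}$\Rightarrow$\ref{MaT:1} directly, since Fact~\ref{TypeD:1} presumes maximal monotonicity of $A$ as an input; that is precisely the gap Proposition~\ref{PrTD:1} was designed to close.
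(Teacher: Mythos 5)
Your proposal is correct and takes exactly the same route as the paper, whose entire proof reads ``Directly combine Fact~\ref{TypeD:1} and Proposition~\ref{PrTD:1}.'' Your additional remark about why Fact~\ref{TypeD:1} alone cannot deliver \ref{MaT:03}$\Rightarrow$\ref{MaT:1} (it presupposes maximality) correctly identifies the role of Proposition~\ref{PrTD:1}.
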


\begin{proof}
Directly combine Fact~\ref{TypeD:1} and Proposition~\ref{PrTD:1}.
\end{proof}

The original Br\'ezis and Browder result follows.

\begin{corollary}[Br\'ezis and Browder]
Suppose that $X$ is  reflexive.
Let $A\colon X \To X^*$ be a monotone linear relation
such that $\gra A$ is closed. Then the following are equivalent.
\begin{enumerate}
\item\label{BRS:1}
$A$ is maximally monotone.
\item\label{BRS:2}
$A^*$ is maximally monotone.
\item\label{BRS:3}
$A^*$ is monotone.
\end{enumerate}
\end{corollary}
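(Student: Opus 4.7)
The plan is to derive this corollary by two successive applications of Theorem~\ref{BrBrTD:1}, exploiting two reflexivity facts: in a reflexive space maximal monotonicity collapses into type (D) (Fact~\ref{ITPR}(ii)), and $A^{**}=A$ whenever $\gra A$ is closed.

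For the equivalence \ref{BRS:1} $\Leftrightarrow$ \ref{BRS:3}, I would invoke Theorem~\ref{BrBrTD:1} directly for $A$: it equates ``$A$ is maximally monotone of type (D)'' with ``$A^*$ is monotone''. Since $X$ is reflexive, Fact~\ref{ITPR}(ii) says that every maximally monotone operator on $X$ is already of type (D), so the qualifier is redundant and the first condition is exactly \ref{BRS:1}.

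For \ref{BRS:2} $\Leftrightarrow$ \ref{BRS:3}, the forward direction \ref{BRS:2} $\Rightarrow$ \ref{BRS:3} is immediate, since a maximally monotone operator is in particular monotone. For the converse, I would apply Theorem~\ref{BrBrTD:1} with $A^*$ in the role of $A$. Reflexivity identifies $X^{**}$ with $X$ (and $X^{***}$ with $X^*$), so $A^*$ is a linear relation from $X$ to $X^*$; its graph is norm closed (being an annihilator), and \ref{BRS:3} supplies monotonicity. The adjoint of $A^*$ is $A^{**}$, which equals $A$ because $\gra A$ is closed and $X$ is reflexive; thus $(A^*)^*=A$ is monotone by standing hypothesis. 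Theorem~\ref{BrBrTD:1} then delivers that $A^*$ is maximally monotone (in fact of type (D)), which is \ref{BRS:2}.

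The only step requiring a bit of care is to legitimize the identifications $X^{**}=X$, $X^{***}=X^*$, $A^{**}=A$ so that the main theorem may be applied to $A^*$ as a linear relation $X \To X^*$. Once those identifications are in place, no further work is needed; the corollary is a direct packaging of Theorem~\ref{BrBrTD:1} applied to $A$ and to $A^*$, combined with Fact~\ref{ITPR}(ii).
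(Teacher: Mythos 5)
Your proposal is correct and follows essentially the same route as the paper: (i)$\Leftrightarrow$(iii) by applying Theorem~\ref{BrBrTD:1} together with Fact~\ref{ITPR}(ii), (ii)$\Rightarrow$(iii) trivially, and (iii)$\Rightarrow$(ii) by noting $(A^*)^*=A$ (since $\gra A$ is closed and $X$ is reflexive) and applying Theorem~\ref{BrBrTD:1} to $A^*$. You merely spell out a few identifications that the paper leaves implicit.
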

\begin{proof}
``\ref{BRS:1}$\Leftrightarrow$\ref{BRS:3}'': Apply Theorem~\ref{BrBrTD:1}
and Fact~\ref{ITPR} directly.

``\ref{BRS:2}$\Rightarrow$\ref{BRS:3}'': Clear.

``\ref{BRS:3}$\Rightarrow$\ref{BRS:2}'': Since $\gra A$ is closed, $(A^*)^*=A$.
Apply Theorem~\ref{BrBrTD:1} to $A^*$  .
\end{proof}

In the case of a skew operator we can add maximality of the adjoint and so we prefigure results of the next section:

\begin{corollary}\label{CoSBr:1}
Let $A\colon X \To X^*$ be a skew operator
such that $\gra A$ is closed. Then the following are equivalent.
\begin{enumerate}
\item\label{MaTS:1} $A$ is maximally monotone of type (D).

\item \label{MaTS:03} $A^*$ is monotone.
\item \label{MaTS:04} $A^*$ is maximally monotone  with respect to $X^{**}\times X^*$.

\end{enumerate}
\end{corollary}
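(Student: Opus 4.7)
The plan is to leverage Theorem~\ref{BrBrTD:1} as much as possible and use skewness only for the genuinely new implication. A skew operator with closed graph is in particular a monotone linear relation with closed graph, so Theorem~\ref{BrBrTD:1} immediately yields the equivalence \ref{MaTS:1}$\iff$\ref{MaTS:03}. The implication \ref{MaTS:04}$\Rightarrow$\ref{MaTS:03} is trivial (maximal monotonicity entails monotonicity). It remains to prove \ref{MaTS:03}$\Rightarrow$\ref{MaTS:04}, and this is where skewness is essential.

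For \ref{MaTS:03}$\Rightarrow$\ref{MaTS:04}, I would take any $(z^{**},z^*)\in X^{**}\times X^*$ that is monotonically related to $\gra A^*$ and show $(z^{**},z^*)\in\gra A^*$. The key structural point is that skewness gives $\gra A\subseteq\gra(-A^*)$, so $(y,-y^*)\in\gra A^*$ whenever $(y,y^*)\in\gra A$, and by linearity of $\gra A$ so is $(\lambda y,-\lambda y^*)$ for every $\lambda\in\RR$. Substituting these scaled pairs into the monotonically-related inequality yields
\[
\langle z^{**},z^*\rangle+\lambda\bigl(\langle z^{**},y^*\rangle-\langle y,z^*\rangle\bigr)-\lambda^2\langle y,y^*\rangle\geq 0,\qquad\forall\lambda\in\RR,\ \forall(y,y^*)\in\gra A.
\]
Skewness kills the quadratic term ($\langle y,y^*\rangle=0$), so the remaining affine expression is nonnegative for all $\lambda\in\RR$, forcing $\langle z^{**},y^*\rangle=\langle y,z^*\rangle$ for every $(y,y^*)\in\gra A$. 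Rewriting as $\langle y,z^*\rangle+\langle y^*,-z^{**}\rangle=0$, this is precisely $(z^*,-z^{**})\in(\gra A)^{\bot}$, i.e., $(z^{**},z^*)\in\gra A^*$ by the definition of the adjoint, which is what we wanted.

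There is no serious obstacle here: the heavy analytic machinery (Fitzpatrick--Phelps, Br\o ndsted--Rockafellar, Attouch--Br\'ezis, etc.) has already been absorbed into Theorem~\ref{BrBrTD:1} and Proposition~\ref{PrTD:1}. The new content of the corollary is the elementary observation that, for a skew linear relation, the monotonic-relatedness condition on $\gra A^*$ collapses---via linearity of $\gra A$ and the vanishing of $\langle y,y^*\rangle$---to the orthogonality condition defining $\gra A^*$. The hypothesis that $A^*$ is monotone plays only a cosmetic role in this step: it makes the word \emph{maximally} in \ref{MaTS:04} well-posed, while the ``maximal'' content itself flows entirely from skewness and linearity.
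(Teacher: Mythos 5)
Your proposal is correct and follows essentially the same route as the paper: reduce \ref{MaTS:1}$\iff$\ref{MaTS:03} to Theorem~\ref{BrBrTD:1}, and for \ref{MaTS:03}$\Rightarrow$\ref{MaTS:04} use $\gra(-A)\subseteq\gra A^*$ together with linearity and $\langle y,y^*\rangle=0$ to collapse monotone relatedness to the orthogonality condition $(z^*,-z^{**})\in(\gra A)^{\bot}$. Your scaling-in-$\lambda$ computation simply makes explicit the step the paper compresses into ``since $\gra A$ is linear.''
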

\begin{proof}
By Theorem~\ref{BrBrTD:1}, it only remains to show

``\ref{MaTS:03}$\Rightarrow$\ref{MaTS:04}'':
Let $(z^{**}, z^*)\in X^{**}\times X^*$ be
monotonically related to $\gra A^*$. Since $\gra (-A)\subseteq\gra A^*$,
$(z^{**}, z^*)$ is monotonically related to $\gra (-A)$.
Thus $(z^{*}, z^{**})\in\left[\gra (-A)\right]^{\bot}$ since $\gra A$ is linear. Hence $(z^{**},z^*)\in\gra A^*$.
Hence $A^*$ is maximally monotone.
\end{proof}

\begin{remark}
We cannot say $A^*$ is maximally monotone with respect to
$X^{**}\times X^{***}$ in Corollary~\ref{CoSBr:1}\ref{MaTS:04}:
indeed,
let $A$ be defined by \begin{align*}\gra A=\{0\}\times X^*.\end{align*}
 Then $\gra A^*=\{0\} \times X^*$.
If $X$ is not reflexive, then $X^*\subsetneqq X^{***}$ and so
$\gra A^*$ is a proper subset of $\{0\}\times X^{***}$. Hence
$A^*$ is not maximally monotone with respect to $X^{**}\times X^{***}$ although
$A$ is maximally monotone of type (D) (since $A=N_{\{0\}}$
by Fact~\ref{ITPR}).
\end{remark}

In the next section,
we turn to the question of how the skew part of the adjoint behaves.

\section{Decomposition of monotone linear relations}\label{s:skew}
Let us first gather some basic properties about monotone linear relations,
and conditions for them to be maximally monotone.

The next three propositions were
 proven in reflexive spaces in \cite[Proposition~2.2]{BWY3}.
We adjust the proofs to a general Banach space setting.

\begin{proposition}[Monotone linear relations]\label{P:a}
Let $A\colon X \To X^*$ be a linear relation.
Then the following hold.
\begin{enumerate}
\item\label{Nov:s1}
Suppose $A$ is monotone. Then
$\dom A\subseteq (A0)_{\bot}$ and $A0\subseteq (\dom A)^\bot$;
consequently, if $\gra A$ is closed, then
$\dom A\subseteq\overline{\dom A^*}^{\wk}\cap X$ and $A0\subseteq A^*0$.
\item \label{Th:032}
$(\forall x\in\dom A)(\forall z\in (A0)_{\bot})$
$\langle z,Ax\rangle$ is single-valued.
\item \label{Sia:2c}
$(\forall z \in (A0)_{\bot})$ $\dom A \to \RR \colon y\mapsto \scal{z}{Ay}$
is linear.
\item\label{Th:031} If $A$ is monotone, then
$(\forall x\in\dom A)$ $\langle x,Ax\rangle$ is single-valued.
\item\label{Th:031L}$A$ is monotone $\Leftrightarrow$
$(\forall x\in\dom A)$ $\inf\langle x,Ax\rangle$ $\geq 0$.
\item\label{Th:031+} If $(x,x^*)\in (\dom A)\times X^*$ is monotonically
related to $\gra A$ and $x_{0}^*$ $\in Ax$, then
$x^*-x_{0}^*\in (\dom A)^{\perp}.$

\end{enumerate}
\end{proposition}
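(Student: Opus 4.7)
The proposition packages six facts that all hinge on two structural features of a monotone linear relation: $(0,0) \in \gra A$ (because $\gra A$ is a subspace) and closure of $\gra A$ under arbitrary linear combinations. The plan is to handle the parts in the listed order, since (i)--(iii) feed into (iv)--(vi), and to translate the intrinsic conclusions of (i) into the advertised closed-graph consequences by invoking Fact~\ref{Rea:1}\ref{Sia:1} and Fact~\ref{Rea:1}\ref{Sia:2}.

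For (i), fix $x \in \dom A$ with some $x_0^* \in Ax$ and $u^* \in A0$. Because $\gra A$ is a subspace, $(x, x_0^* + \lambda u^*) \in \gra A$ for every $\lambda \in \RR$; pairing this with $(0,0) \in \gra A$ via monotonicity gives $\langle x, x_0^*\rangle + \lambda\langle x, u^*\rangle \geq 0$ for all $\lambda \in \RR$, which forces $\langle x, u^*\rangle = 0$. This single identity simultaneously delivers $\dom A \subseteq (A0)_\bot$ and $A0 \subseteq (\dom A)^\bot$. When $\gra A$ is closed, Fact~\ref{Rea:1}\ref{Sia:1} identifies $(\dom A)^\bot$ with $A^*0$, and Fact~\ref{Rea:1}\ref{Sia:2} identifies $(A0)^\bot$ (hence $(A0)_\bot = (A0)^\bot \cap X$) with $\overline{\dom A^*}^{\wk} \cap X$, yielding both advertised inclusions. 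For (ii), Fact~\ref{Rea:1}\ref{Th:28} writes $Ax = x_0^* + A0$, and $z \in (A0)_\bot$ annihilates the $A0$ summand, so $\langle z, Ax\rangle$ collapses to the singleton $\{\langle z, x_0^*\rangle\}$. Part (iii) is then immediate from the linearity formula Fact~\ref{Rea:1}\ref{Th:30}, once (ii) makes $y \mapsto \langle z, Ay\rangle$ single-valued.

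Part (iv) follows by applying (ii) with $z := x$, using (i) to ensure $x \in (A0)_\bot$. For (v), the forward direction pairs any $(x, x^*) \in \gra A$ with $(0,0) \in \gra A$ under monotonicity to get $\langle x, x^*\rangle \geq 0$; the converse uses that differences of graph points remain in $\gra A$, reducing general monotonicity of $A$ to nonnegativity of $\langle \cdot, A\cdot\rangle$. For (vi), fix an arbitrary $y \in \dom A$ with $y^* \in Ay$ and test the monotone-relation hypothesis against $(x + \lambda y, x_0^* + \lambda y^*) \in \gra A$ for $\lambda \in \RR$; this yields $-\lambda\langle y, x^* - x_0^*\rangle + \lambda^2\langle y, y^*\rangle \geq 0$ for all $\lambda$, so sending $\lambda \to 0^{\pm}$ forces $\langle y, x^* - x_0^*\rangle = 0$, giving $x^* - x_0^* \in (\dom A)^\perp$.

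None of the individual steps is hard; the main care needed is the duality bookkeeping at the end of (i), where the identity $(A0)_\bot = \overline{\dom A^*}^{\wk} \cap X$ has to be applied with the convention that $\overline{\,\cdot\,}^{\wk}$ denotes the weak$^*$ closure in $X^{**}$ induced by $X^*$ and $D_\bot = D^\bot \cap X$ for $D \subseteq X^*$, so that the resulting inclusion genuinely lives in $X$ rather than in $X^{**}$. Beyond that, everything reduces mechanically to the subspace structure of $\gra A$ combined with the one-parameter scaling trick used throughout.
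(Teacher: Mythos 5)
Your proposal is correct and follows essentially the same route as the paper: the subspace structure of $\gra A$ plus the one-parameter scaling trick for (i), (v), and (vi), Fact~\ref{Rea:1}\ref{Th:28}--\ref{Th:30} for (ii)--(iii), and Fact~\ref{Rea:1}\ref{Sia:1}\&\ref{Sia:2} for the closed-graph consequences in (i). The only cosmetic differences are that in (i) the paper scales within $A0$ (via $\sup\langle x,A0\rangle\leq\langle x,x^*\rangle$) rather than within $Ax$, and in (vi) it uses one-sided $\lambda\to0^+$ together with linearity of $\dom A$ instead of your two-sided limit; both are equivalent.
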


\begin{proof}
\ref{Nov:s1}: Pick $x\in \dom A$.
Then there exists $x^*\in X^*$ such that
  $(x,x^*)\in \gra A.$  By
  monotonicity of $A$ and since $\{0\}\times A0\subseteq \gra A$,
 we have  $\langle x, x^*\rangle\geq \sup\langle x,A0\rangle$.
Since $A0$ is a linear subspace,
we obtain
$x\bot A0$. This implies
$\dom A\subseteq (A0)_{\bot}$ and $A0\subseteq (\dom A)^\bot$.
If $\gra A$ is closed, then Fact~\ref{Rea:1}\ref{Sia:2}\&\ref{Sia:1}
yields $\dom A\subseteq (A0)_{\bot}
\subseteq (A0)^{\bot}=\overline{\dom A^*}^{\wk}$ and  $A0\subseteq A^*0$.

\ref{Th:032}:
Take $x\in\dom A$, $x^*\in Ax$, and $z\in (A0)_{\bot}$.
By Fact~\ref{Rea:1}\ref{Th:28},
$\langle z,Ax\rangle=\langle z,x^*+A0\rangle=\langle z,x^*\rangle$.

\ref{Sia:2c}:
Take $z\in (A0)_{\bot}$. By \ref{Th:032},
$(\forall y\in\dom A)$ $\langle z, Ay\rangle$
is single-valued.
Now let $x,y$ be in $\dom A$, and let $\alpha,\beta$ be in $\RR$.
If $(\alpha,\beta) = (0,0)$, then
$\langle z, A(\alpha x+\beta y)\rangle=\langle z, A0\rangle=0
=\alpha \langle z, Ax\rangle +\beta \langle z, Ay\rangle$.
And if $(\alpha,\beta)\neq (0,0)$, then
Fact~\ref{Rea:1}\ref{Th:30} yields
$\langle z, A(\alpha x+\beta y)\rangle =\langle z, \alpha Ax +\beta Ay\rangle
=\alpha \langle z, Ax\rangle +\beta\langle z, Ay\rangle$.
This verifies  linearity.

\ref{Th:031}: Apply
\ref{Nov:s1}\&\ref{Th:032}.

\ref{Th:031L}: ``$\Rightarrow$'': This  follows from
 the fact that $(0,0)\in\gra A$.
``$\Leftarrow$'': If $x$ and $y$ belong to $\dom A$, then
Fact~\ref{Rea:1}\ref{Th:30} yields
$\langle x-y,Ax-Ay\rangle=\langle x-y,A(x-y)\rangle\geq 0$.

\ref{Th:031+}: Let $(x,x^*)\in\dom A\times X^*$
be monotonically related to $\gra A$,
and take $x_0^*\in Ax.$
For every $(v,v^*)\in\gra A$, we have $x_0^*+
v^*\in A(x+v)$ (by Fact~\ref{Rea:1}\ref{Th:30}); hence,
$\langle x-(x+v), x^*-(x_0^*+ v^*)\rangle\geq 0$ and thus
$\langle v,  v^*\rangle\geq \langle v, x^*-x_0^*
\rangle$. Now take
$\lambda>0$  and replace $(v,v^*)$ in the last inequality
by $(\lambda v,\lambda v^*)$.
Then divide by $\lambda$ and let $\lambda\rightarrow 0^+$ to see that
$0\geq \sup\langle \dom A,\ x^*-x_0^*\rangle$.
Since $\dom A$ is linear, it follows that  $x^*-x_0^*\in
 (\dom A)^\bot$.
\end{proof}

We define the \emph{symmetric part} and the \emph{skew part} of $A$ via
\begin{equation}
\label{Fee:1}
A_+ := \thalb A + \thalb A^* \quad\text{and}\quad
A_{\mathlarger{\circ}} := \thalb A - \thalb A^*,
\end{equation}
respectively. It is easy to check that $A_+$ is symmetric and that $A_{\mathlarger{\circ}} $
is skew.

\begin{proposition}[Maximally monotone linear relations]\label{linear}
Let $A\colon X \To X^*$ be a monotone linear relation.
Then the following hold.
\begin{enumerate}
\item\label{sia:2v} If $A$ is maximally monotone,
 then $(\dom A)^{\perp}= A0$ and hence
$\overline{\dom A}=(A0)_\bot$.
\item \label{sia:3v}
If $\dom A$ is closed, then:
$A$ is maximally monotone $\Leftrightarrow$ $(\dom A)^\bot = A0$.
\item \label{sia:3iv}
If $A$ is maximally monotone, then
$\overline{\dom A^*}^{\wk}\cap X=\overline{\dom A}=(A0)_\bot$,
 and
$A0=A^*0=A_+0 = A_{\mathlarger{\circ}}0= (\dom A)^{\perp}$ is
(weak$^*$) closed.
\item \label{sia:3vi}
If $A$ is maximally monotone and $\dom A$ is closed,
then $\dom A^*\cap X=\dom A$.

\item \label{sia:3vii}
If $A$ is maximally monotone and $\dom A \subseteq \dom A^*$,
then
$A=A_{+}+A_{\mathlarger{\circ}}$, $A_{+}=A-A_{\mathlarger{\circ}}$,
 and $A_{\mathlarger{\circ}}=A-A_{+}$.

\item \label{dmns:1}
If $A$ is maximally monotone and $\dom A$ is closed,
then both $A_+$ and $A_{\mathlarger{\circ}}$ are maximally monotone.

\item \label{dmns:2}
If $A$ is maximally monotone and $\dom A$ is closed,
then $A^*=(A_+)^*+(A_{\mathlarger{\circ}})^*$.
\end{enumerate}
\end{proposition}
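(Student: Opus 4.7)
The plan is to reduce the claim to Borwein's adjoint-sum formula (Fact~\ref{LinAdSum}) applied to the decomposition $A = A_+ + A_{\mathlarger{\circ}}$ supplied by Proposition~\ref{linear}\ref{sia:3vii}. To invoke Fact~\ref{LinAdSum} we must verify three things: $A_+$ and $A_{\mathlarger{\circ}}$ are linear relations from $X$ to $X^*$ with closed graphs, and $\dom A_+ - \dom A_{\mathlarger{\circ}}$ is a closed subspace of $X$.

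First I would check that $A_+$ and $A_{\mathlarger{\circ}}$ are genuinely linear relations on $X$: by Proposition~\ref{linear}\ref{sia:3vi}, the hypothesis ``$A$ maximally monotone and $\dom A$ closed'' yields $\dom A^*\cap X=\dom A$, so that $A^*$ restricted to (the canonical image in $X^{**}$ of) $\dom A$ takes values in $X^*$. Consequently $A_+=\tfrac12A+\tfrac12A^*$ and $A_{\mathlarger{\circ}}=\tfrac12A-\tfrac12A^*$ are well-defined linear relations from $X$ to $X^*$ with $\dom A_+=\dom A_{\mathlarger{\circ}}=\dom A$, and Proposition~\ref{linear}\ref{sia:3vii} gives $A=A_++A_{\mathlarger{\circ}}$.

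Second I would use Proposition~\ref{linear}\ref{dmns:1} to conclude that both $A_+$ and $A_{\mathlarger{\circ}}$ are maximally monotone, and observe that any maximally monotone operator automatically has closed graph: if $(x_n,x_n^*)\in\gra B$ converges to $(x,x^*)$, then passing to the limit in $\langle x_n-y,x_n^*-y^*\rangle\geq0$ shows $(x,x^*)$ is monotonically related to $\gra B$, hence in $\gra B$. So $\gra A_+$ and $\gra A_{\mathlarger{\circ}}$ are closed. Moreover, since $\dom A$ is a closed subspace of $X$, we have $\dom A_+-\dom A_{\mathlarger{\circ}}=\dom A-\dom A=\dom A$, which is closed.

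Finally, applying Fact~\ref{LinAdSum} to the pair $(A_+,A_{\mathlarger{\circ}})$ yields
\begin{equation*}
A^*=(A_++A_{\mathlarger{\circ}})^*=(A_+)^*+(A_{\mathlarger{\circ}})^*,
\end{equation*}
as required. There is no real obstacle here: the argument is essentially bookkeeping, and the only mildly delicate point is confirming that the previously established parts \ref{sia:3vi}, \ref{sia:3vii} and \ref{dmns:1} of Proposition~\ref{linear} simultaneously deliver (i) the identification of $\dom A^*\cap X$ with $\dom A$ (needed so that $A_+,A_{\mathlarger{\circ}}$ map into $X^*$), (ii) the additive decomposition of $A$, and (iii) the maximal monotonicity (hence closed graphs) of the two summands, so that Fact~\ref{LinAdSum} is genuinely applicable.
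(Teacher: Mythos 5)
Your proposal addresses only the last item, $A^*=(A_+)^*+(A_{\mathlarger{\circ}})^*$, taking items (iv)--(vi) of the proposition as already available; it does not touch the proofs of items (i)--(vi), which constitute most of the paper's argument. For that last item, your proof is correct and is essentially identical to the paper's: the authors likewise obtain $A=A_++A_{\mathlarger{\circ}}$ from items (iv) and (v), get maximal monotonicity of $A_+$ and $A_{\mathlarger{\circ}}$ from item (vi), and then invoke Fact~\ref{LinAdSum}. The one point you spell out that the paper leaves implicit is the verification that maximal monotonicity forces the graphs of $A_+$ and $A_{\mathlarger{\circ}}$ to be closed (and that $\dom A_+-\dom A_{\mathlarger{\circ}}=\dom A$ is closed), which is exactly the bookkeeping needed to legitimize the appeal to Fact~\ref{LinAdSum}; this is a welcome clarification rather than a deviation.
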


\begin{proof}
\ref{sia:2v}: Since
$A+N_{\dom A}=A+(\dom A)^{\perp}$
is a monotone extension of $A$ and $A$ is
maximally monotone, we must have
$A+(\dom A)^{\perp}=A$. Then $A0+(\dom A)^{\perp}=A0$. As $0\in A0$,
$(\dom A)^{\perp}\subseteq A0.$ Combining with
Proposition~\ref{P:a}\ref{Nov:s1},  we have
$(\dom A)^{\bot}=A0$. By Fact~\ref{rudin:1},
$\overline{\dom A}=(A0)_{\bot}$.

 \ref{sia:3v}:  ``$\Rightarrow$": Clear from \ref{sia:2v}.
 ``$\Leftarrow$":
 The assumptions and
 Fact~\ref{rudin:1} imply that  $\dom A=\overline{\dom A}=
\left[ (\dom A)^{\bot}\right]_{\bot}=(A0)_{\bot}$.
 Let $(x,x^*)$ be monotonically related to $\gra A$.
   We have $\inf\langle x-0, x^*-A0\rangle\geq 0$.
 Then we have $x\in (A0)_{\bot}$ and hence $x\in\dom A$.
  Then by Proposition~\ref{P:a}\ref{Th:031+}
 and Fact~\ref{Rea:1}\ref{Th:28}, $x^*\in Ax$. Hence $A$ is maximally monotone.

 \ref{sia:3iv}: By \ref{sia:2v} and  Fact~\ref{Rea:1}\ref{Sia:1},
 $A0=(\dom A)^{\perp}=A^*0$ is weak$^*$ closed and thus
 $A_+0 = A_{\mathlarger{\circ}}0=A0=(\dom A)^{\perp}$.
 Then by Fact~\ref{Rea:1}\ref{Sia:2} and \ref{sia:2v},
  $\overline{\dom A^*}^{\wk}\cap X=(A0)_{\bot}=\overline{\dom A}$.

 \ref{sia:3vi}: Combine  \ref{sia:3iv} with Fact~\ref{Rea:1}\ref{Th:32}.

 \ref{sia:3vii}:
  We show only the proof of $A=A_{+}+A_{\mathlarger{\circ}}$
as the other two proofs are analogous.
Clearly, $\dom A_{+}=\dom A_{\mathlarger{\circ}}
=\dom A\cap\dom A^*=\dom A$.
Let $x\in\dom A$, and $x^*\in Ax$ and $y^*\in A^*x$. We write
$x^*=\tfrac{x^*+y^*}{2}+\tfrac{x^*-y^*}{2}\in (A_{+}+A_{\mathlarger{\circ}})x$.
Then, by \ref{sia:3iv} and Fact~\ref{Rea:1}\ref{Th:28},
$Ax=x^*+A0=x^*+(A_{+}+A_{\mathlarger{\circ}})0=
(A_{+}+A_{\mathlarger{\circ}})x$.
Therefore, $A=A_{+}+A_{\mathlarger{\circ}}$.

\ref{dmns:1}:
By \ref{sia:3vi},
\begin{align}\dom A_+=\dom{A_\mathlarger{\circ}}=\dom A\ \text{ is closed}.\label{SCK:05}
\end{align}
Hence, by  \ref{sia:3iv},
\begin{align}
A_{\mathlarger{\circ}}0=A_+0=A0=(\dom A)^{\bot}=(\dom A_+)^{\bot}
= (\dom A_{\mathlarger{\circ}})^{\bot}.\label{SCK:M6}
\end{align}
Since $A$ is monotone,
so are  $A_+$ and $A_\mathlarger{\circ}$.
Combining \eqref{SCK:05}, \eqref{SCK:M6}, and  \ref{sia:3v},
we deduce that
$A_+$ and  $A_{\mathlarger{\circ}}$ are maximally monotone.

\ref{dmns:2}: By \ref{sia:3vi}\&\ref{sia:3vii},
\begin{align}
A=A_++ A_{\mathlarger{\circ}}.
\end{align}
Then  by \ref{dmns:1}, \ref{sia:3vi}, and Fact~\ref{LinAdSum},
$A^*=(A_+)^*+ (A_{\mathlarger{\circ}})^*$.
\end{proof}

For a monotone linear relation $A\colon X \To X^*$ it will be convenient
to define --- as in, e.g., \cite{BBW} --- a generalized quadratic form
\begin{equation*}
(\forall x\in X)\quad
q_A(x) =  \begin{cases} \tfrac{1}{2}\langle x,Ax\rangle,&\text{if $x\in \dom A$};\\
+\infty,&\text{otherwise}.\end{cases}\end{equation*}
We  write $\overline{q_A}$ for the lower semicontinuous hull of $q_A$.

\begin{proposition}\label{Nov:s2} Let $A\colon X \To X^*$ be a monotone linear
relation,
let $x$ and $y$ be in $\dom A$, and let $\lambda\in\RR$.
Then
$q_A$ is single-valued, $q_A\geq0$ and
\begin{align}
&\lambda q_A(x) + (1-\lambda)q_A(y) - q_{A}(\lambda x + (1-\lambda)y) =
\lambda(1-\lambda)q_A(x-y)\nonumber\\& =
\tfrac{1}{2}\lambda(1-\lambda)\scal{x-y}{Ax-Ay}.\label{deli:1}
\end{align}
Consequently, $q_A$ is convex.
\end{proposition}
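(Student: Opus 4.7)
The plan is to peel off the three claims in order: single‑valuedness, non‑negativity, then the two‑variable identity from which convexity is an immediate consequence.

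For \emph{single‑valuedness} of $q_A$ on $\dom A$, I would cite Proposition~\ref{P:a}\ref{Th:031}: monotonicity forces $\langle x,Ax\rangle$ to be a single point for every $x\in\dom A$. For \emph{non‑negativity}, I would apply Proposition~\ref{P:a}\ref{Th:031L}, which says that monotonicity is equivalent to $\inf\langle x,Ax\rangle\geq0$ for all $x\in\dom A$; combined with single‑valuedness this forces $q_A(x)\geq 0$ on $\dom A$, and the definition gives $q_A\equiv+\infty$ elsewhere, so $q_A\geq 0$ everywhere.

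The main step is the identity. Since $\dom A$ is a linear subspace, $\lambda x+(1-\lambda)y$ and $x-y$ lie in $\dom A$ whenever $x,y$ do, so every $q_A$ value in \eqref{deli:1} is finite and single‑valued. Using Fact~\ref{Rea:1}\ref{Th:30} (which is legitimate because $(\lambda,1-\lambda)\neq(0,0)$ and $(1,-1)\neq(0,0)$), I may write
\begin{equation*}
A\bigl(\lambda x+(1-\lambda)y\bigr)=\lambda Ax+(1-\lambda)Ay,\qquad A(x-y)=Ax-Ay,
\end{equation*}
as equalities of cosets of $A0$. To pair these with $\lambda x+(1-\lambda)y$ and $x-y$ unambiguously, I invoke Proposition~\ref{P:a}\ref{Nov:s1}\&\ref{Th:032}: since $\dom A\subseteq (A0)_{\bot}$, each of $\langle x,Ax\rangle$, $\langle y,Ay\rangle$, $\langle x,Ay\rangle$, $\langle y,Ax\rangle$ is a single real number. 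Then I simply expand
\begin{equation*}
2q_{A}\!\bigl(\lambda x+(1-\lambda)y\bigr)=\lambda^{2}\langle x,Ax\rangle+\lambda(1-\lambda)\bigl(\langle x,Ay\rangle+\langle y,Ax\rangle\bigr)+(1-\lambda)^{2}\langle y,Ay\rangle
\end{equation*}
and subtract from $2\lambda q_A(x)+2(1-\lambda)q_A(y)$. Because $\lambda-\lambda^{2}=(1-\lambda)-(1-\lambda)^{2}=\lambda(1-\lambda)$, the cross terms regroup into $\tfrac{1}{2}\lambda(1-\lambda)\langle x-y,Ax-Ay\rangle$, and one more application of Fact~\ref{Rea:1}\ref{Th:30} rewrites this as $\lambda(1-\lambda)q_A(x-y)$. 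This yields both equalities in \eqref{deli:1}.

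\emph{Convexity} is then a one‑line corollary: for $\lambda\in[0,1]$ the right‑hand side of \eqref{deli:1} is a product of a nonnegative scalar $\lambda(1-\lambda)$ and the nonnegative quantity $q_A(x-y)$, hence $q_A(\lambda x+(1-\lambda)y)\leq \lambda q_A(x)+(1-\lambda)q_A(y)$ on $\dom A$, and convexity on all of $X$ follows because $q_A\equiv+\infty$ off $\dom A$. The only subtlety worth watching is the bookkeeping that ensures every bracket pairing is single‑valued — everything else is algebraic manipulation.
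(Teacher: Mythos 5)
Your proof is correct and follows essentially the same route as the paper: single-valuedness and non-negativity from Proposition~\ref{P:a}\ref{Th:031}\&\ref{Th:031L}, then the identity \eqref{deli:1} by expanding the quadratic form using linearity of $A$ and the single-valuedness of the pairings guaranteed by $\dom A\subseteq (A0)_\bot$. The only cosmetic difference is that the paper compresses your explicit expansion into a citation of Proposition~\ref{P:a}\ref{Sia:2c} (linearity of $y\mapsto\scal{z}{Ay}$), which is itself proved from the same two facts you invoke.
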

\begin{proof}
Proposition~\ref{P:a}\ref{Th:031}\&\ref{Th:031L} show
that $q_A$ is single-valued and that $q_A\geq0$.
Combining with Proposition~\ref{P:a}\ref{Nov:s1}\&\ref{Sia:2c}, we obtain
\eqref{deli:1}.
Therefore, $q_A$ is convex.
\end{proof}

As in the classical case, $q_A$ allows us to connect properties of $A_+$ to those of $A$ and $A^*$.

\begin{proposition} \label{f:PheSim} Let  $A:X\rightrightarrows X^*$
 be a monotone  linear relation.
Then the following hold.
\begin{enumerate}
\item \label{f:PheSim:lsc02}  $\overline{q_A}+\iota_{\dom A_+}=q_{A_+}$ and thus $q_{A_+}$ is convex.
\item \label{f:PheSim:quad}
$\gra A_+ \subseteq\gra \partial \overline{q_A}$.
If $A_+$ is maximally monotone, then $A_+=\partial \overline{q_A}$.

\item \label{dms:quad}
If $A$ is maximally monotone and $\dom A$ is closed,
then $A_+=\partial \overline{q_A}$.
\item \label{f:PheSim:brah:1}If $A$ is maximally monotone,
then $A^*|_X$ is monotone.
\item \label{f:PheSim:brah:2}If $A$ is maximally
monotone and $\dom A$ is closed, then $A^*|_X$ is maximally monotone.
\end{enumerate}
\end{proposition}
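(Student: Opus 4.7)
The plan is to handle (i) and (ii) together through a single subdifferential inequality, deduce (iii) from (ii), then prove (iv) by a maximality argument built on the adjoint identity, and finally leverage (iv) for (v).

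For (i)--(ii): fix $x\in\dom A_+=\dom A\cap\dom A^*$. Fact~\ref{Rea:1}\ref{Sia:2b} applied with $y=x$ yields $\langle x,A^*x\rangle=\langle x,Ax\rangle$, so $q_{A_+}(x)=\tfrac{1}{2}\langle x,A_+x\rangle=\tfrac{1}{2}\langle x,Ax\rangle=q_A(x)$. Next, let $z^*=\tfrac{1}{2}x^*+\tfrac{1}{2}y^*\in A_+x$ with $x^*\in Ax$, $y^*\in A^*x$, and let $(b,b^*)\in\gra A$. Expanding $q_A(b)-q_A(x)-\langle b-x,z^*\rangle$ and twice invoking Fact~\ref{Rea:1}\ref{Sia:2b} (using $\langle b,y^*\rangle=\langle x,b^*\rangle$ and $\langle x,y^*\rangle=\langle x,x^*\rangle$) produces the remainder $\tfrac{1}{2}\langle b-x,b^*-x^*\rangle\ge 0$. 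Hence $z^*\in\partial q_A(x)$; the affine minorant $b\mapsto q_A(x)+\langle b-x,z^*\rangle$ of $q_A$ also minorizes $\overline{q_A}$, which at $x$ forces $\overline{q_A}(x)=q_A(x)$ and $z^*\in\partial\overline{q_A}(x)$. This simultaneously proves the inclusion in (ii) and the equality $\overline{q_A}+\iota_{\dom A_+}=q_{A_+}$ in (i); convexity of $q_{A_+}$ is inherited from $\overline{q_A}$ and the linear subspace $\dom A_+$. The second claim of (ii) follows because $\partial\overline{q_A}$ is always monotone and extends the maximally monotone $A_+$, so equality holds. Part (iii) is then (ii) combined with Proposition~\ref{linear}\ref{dmns:1}.

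For (iv): since $A^*$ is linear, it suffices to show $\langle x,u^*\rangle\ge 0$ for all $x\in\dom A^*\cap X$ and $u^*\in A^*x$. Using Fact~\ref{Rea:1}\ref{Sia:2b} to replace $\langle a,u^*\rangle$ by $\langle x,a^*\rangle$ gives, for every $(a,a^*)\in\gra A$,
\begin{equation*}
\langle a-x,a^*+u^*\rangle=\langle a,a^*\rangle-\langle x,u^*\rangle.
\end{equation*}
If $\langle x,u^*\rangle\le 0$, the right-hand side is nonnegative for every $(a,a^*)\in\gra A$, so $(x,-u^*)$ is monotonically related to $\gra A$; maximality of $A$ forces $(x,-u^*)\in\gra A$, which puts $x\in\dom A\cap\dom A^*$, and Fact~\ref{Rea:1}\ref{Sia:2b} with $y=x$ then gives $\langle x,u^*\rangle=\langle x,Ax\rangle=\langle x,-u^*\rangle=-\langle x,u^*\rangle$, forcing $\langle x,u^*\rangle=0$. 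Either way $\langle x,u^*\rangle\ge 0$.

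For (v): monotonicity of $A^*|_X$ is (iv); only maximality remains. Suppose $(z,z^*)\in X\times X^*$ is monotonically related to $\gra A^*|_X$. Testing against $(0,a^*)$ with $a^*$ ranging over the linear subspace $A^*0=(\dom A)^\perp$ (Fact~\ref{Rea:1}\ref{Sia:1}) shows $\langle z,(\dom A)^\perp\rangle=\{0\}$, so $z\in((\dom A)^\perp)_\perp=\overline{\dom A}=\dom A$ by closedness; Proposition~\ref{linear}\ref{sia:3vi} then places $z\in\dom A^*\cap X$. Fix $y^*\in A^*z$. For every $v\in\dom A$ and $v^*\in A^*v$, linearity of $A^*$ gives $(z+tv,y^*+tv^*)\in\gra A^*|_X$ for every $t\in\RR$; monotonicity against $(z,z^*)$ yields $-t\langle v,z^*-y^*\rangle+t^2\langle v,v^*\rangle\ge 0$ for all $t$, which forces $\langle v,z^*-y^*\rangle=0$. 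Hence $z^*-y^*\in(\dom A)^\perp=A^*0$, and Fact~\ref{Rea:1}\ref{Th:28} applied to $A^*$ delivers $z^*\in y^*+A^*0=A^*z$.

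The main obstacle is (iv): the ``adjoint cancellation'' $\langle a,u^*\rangle=\langle x,a^*\rangle$ from Fact~\ref{Rea:1}\ref{Sia:2b} is exactly what collapses the displacement $\langle a-x,a^*+u^*\rangle$ to $\langle a,a^*\rangle-\langle x,u^*\rangle$; without it one cannot extract a sign from the maximality of $A$ alone. Once this cancellation is exposed, (i)--(iii) and (v) reduce to structured calculations using the earlier facts.
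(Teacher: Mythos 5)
Your proof is correct and takes essentially the same route as the paper's: the same subdifferential inequality $q_A(b)\ge q_A(x)+\langle b-x, z^*\rangle$ for $z^*\in A_+x$ drives (i)--(iii), and the same adjoint cancellation $\langle a,u^*\rangle=\langle x,a^*\rangle$ reduces (iv) to maximality of $A$ (you argue directly where the paper argues by contradiction, but the computation is identical). For (v) you verify maximality of $A^*|_X$ by hand, which merely inlines the criterion of Proposition~\ref{linear}\ref{sia:3v} that the paper invokes, so there is no substantive difference.
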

\begin{proof}
Let $x\in\dom A_+$.

\ref{f:PheSim:lsc02}: By Fact~\ref{Rea:1}\ref{Sia:2b} and Proposition~\ref{P:a}\ref{Th:031},
 $q_{A_+}=q_{A}|_{\dom A_+}$.
 Then by Proposition~\ref{Nov:s2},
  $q_{A_+}$ is convex. Let $y\in\dom A$. Then by
  Fact~\ref{Rea:1}\ref{Sia:2b},
\begin{align}0\leq\tfrac{1}{2}\langle Ax-Ay,x-y\rangle=
\tfrac{1}{2}\langle Ay,y\rangle+\tfrac{1}{2}\langle Ax,x\rangle-\langle A_+x, y\rangle,
\label{Bor:u2}\end{align}
we have $q_A(y)\geq\langle A_+x, y\rangle-q_A(x)$.
 Take the lower semicontinuous hull of $q_A$ at $y$ to
 deduce that $\overline{q_A}(y)\geq\langle A_+x, y\rangle-q_A(x)$.
  For $y=x$, we have $\overline{q_A}(x)\geq q_A(x)$.
 On the other hand,
  $\overline{q_A}\leq q_A$.
Altogether, $\overline{q_A}(x)=q_A(x)=q_{A_+}(x)$.
Thus \ref{f:PheSim:lsc02} holds.

\ref{f:PheSim:quad}:
Let $y\in\dom A$. By \eqref{Bor:u2} and \ref{f:PheSim:lsc02},
\begin{align}q_A(y)\geq
q_A(x)+\langle A_+x,y-x\rangle=\overline{q_A}(x)
+\langle A_+x,y-x\rangle.\label{Bor:u3}
\end{align}
Since $\dom\overline{q_A}\subseteq\overline{\dom q_A}
=\overline{\dom A}$, by \eqref{Bor:u3},
 $\overline{q_A}(z)\geq
\overline{q_A}(x)+\langle A_+x,z-x\rangle,\quad \forall z\in\dom\overline{q_A}.$
Hence $A_+x\subseteq\partial \overline{q_A}(x)$.
If $A_+$ is maximally monotone, then  $A_+=\partial \overline{q_A}$.
Thus \ref{f:PheSim:quad} holds.

\ref{dms:quad}: Combine Proposition~\ref{linear}\ref{dmns:1}
with \ref{f:PheSim:quad}.

\ref{f:PheSim:brah:1}:
Suppose to the contrary that $A^*|_X$ is not monotone.
By Proposition~\ref{P:a}\ref{Th:031L},
there exists $(x_0,x_0^*)\in\gra A^*$ with $x_0\in X$ such that
 $\langle x_0,x_0^*\rangle<0$.
Now we have
\begin{align}&\langle-x_0-y,x^*_0-y^*\rangle
=-\langle x_0,x^*_0\rangle+\langle y,y^*\rangle+\langle x_0,y^*\rangle
-\langle y,x_0^*\rangle\nonumber\\
&=-\langle x_0,x^*_0\rangle+\langle y,y^*\rangle>0,
\quad \forall (y,y^*)\in\gra A.\label{Brezi:1}\end{align}
Thus, $(-x_0,x^*_0)$ is monotonically related to $\gra A$.
 By maximal monotonicity of $A$, $(-x_0,x^*_0)\in\gra A$.
Then $\langle-x_0-(-x_0),x^*_0-x_0^*\rangle=0$,
 which contradicts \eqref{Brezi:1}. Hence $A^*|_X$ is monotone.

\ref{f:PheSim:brah:2}: By Fact~\ref{Rea:1}\ref{Th:32},
$\dom A^*|_X=(A0)_{\bot}$ and thus $\dom A^*|_X$ is closed.
By Fact~\ref{rudin:1} and Proposition~\ref{linear}\ref{sia:2v},
$(\dom A^*|_X)^{\bot}=((A0)_{\bot})^{\bot}
=\overline{A0}^{\wk}=A0$.
Then by Proposition~\ref{linear}\ref{sia:3iv},
$(\dom A^*|_X)^{\bot}=A^*0=A^*|_X 0$.
Applying
\ref{f:PheSim:brah:1} and Proposition~\ref{linear}\ref{sia:3v},
we see that
 $A^*|_X$ is maximally monotone.
\end{proof}

The proof of Proposition~\ref{f:PheSim}\ref{f:PheSim:brah:1}
was borrowed from \cite[Theorem~2]{Brezis-Browder}.
Results very similar to
Proposition~\ref{f:PheSim}\ref{f:PheSim:lsc02}\&\ref{f:PheSim:quad}
are verified in \cite[Proposition~18.9]{Yao}.
The proof of the next Theorem~\ref{TSkeD:1}\ref{MaTC:1}$\Rightarrow$\ref{MaTC:02}
was partially inspired by that of \cite[Theorem~4.1(v)$\Rightarrow$(vi)]{BB}.
When the domain of $A$ is closed we can obtain additional information
 about the skew part of $A$.

\begin{theorem}[Monotone relations with closed graph and domain]\label{TSkeD:1}
Let $A:X\rightrightarrows X^*$ be a monotone linear relation
 such that $\gra A$ is closed and $\dom A$ is closed.
 Then the following are equivalent.\begin{enumerate}
\item\label{MaTC:1} $A$ is maximally monotone of type (D).

\item \label{MaTC:02} $A_{\mathlarger{\circ}}$
is maximally monotone  of type (D)  with respect to $X\times X^*$ and $A^*0=A0$.
\item \label{MaTC:2} $(A_{\mathlarger{\circ}})^*$
is maximally monotone  with respect to $X^{**}\times X^*$ and $A^*0=A0$.
\item \label{MaTC:3} $(A_{\mathlarger{\circ}})^*$ is  monotone and $A^*0=A0$.
\item \label{MaTC:4} $A^*$ is  monotone.
\item \label{MaTC:5} $A^*$ is  maximally monotone with respect to $X^{**}\times X^*$.
\end{enumerate}
\end{theorem}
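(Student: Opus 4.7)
The plan is to assemble the equivalences from Theorem~\ref{BrBrTD:1} (which supplies (i)$\Leftrightarrow$(v) in any Banach space), Corollary~\ref{CoSBr:1} (the skew analog, which I intend to apply to $A_\mathlarger{\circ}$), and the structural Propositions~\ref{linear} and~\ref{f:PheSim}. I first observe that each of (i)--(vi) forces $A$ to be maximally monotone: (i), (v), (vi) directly or through Theorem~\ref{BrBrTD:1}, while (ii)--(iv) carry the clause $A^*0=A0$ which, combined with closedness of $\dom A$ and Proposition~\ref{linear}\ref{sia:3v}, forces maximality. Hence Proposition~\ref{linear}\ref{sia:3iv}\&\ref{dmns:1}\&\ref{dmns:2} applies and yields the common kernel $A^*0 = A_+0 = A_\mathlarger{\circ}0 = A0$, maximal monotonicity of both $A_+$ and $A_\mathlarger{\circ}$ with domain $\dom A$, and the adjoint sum formula $A^* = (A_+)^* + (A_\mathlarger{\circ})^*$. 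Moreover, Proposition~\ref{f:PheSim}\ref{dms:quad} identifies $A_+ = \partial\overline{q_A}$, so $A_+$ is of type (D) by Fact~\ref{ITPR}, and consequently $(A_+)^*$ is monotone by Theorem~\ref{BrBrTD:1}.

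Since $A_\mathlarger{\circ}$ is a skew maximally monotone linear relation (whence its graph is closed), Corollary~\ref{CoSBr:1} applied to $A_\mathlarger{\circ}$ immediately yields the internal equivalences (ii)$\Leftrightarrow$(iii)$\Leftrightarrow$(iv). The easier bridge direction (iv)$\Rightarrow$(v) then follows by summing two monotone operators via $A^* = (A_+)^* + (A_\mathlarger{\circ})^*$. For (v)$\Leftrightarrow$(vi), the direction (vi)$\Rightarrow$(v) is trivial; the converse I plan to establish by an argument parallel to the proof of Corollary~\ref{CoSBr:1}, combining the maximal monotonicity of $(A_\mathlarger{\circ})^*$ on $X^{**}\times X^*$ (from (iv)) with the symmetric analog for $(A_+)^*$ (using $A_+\subseteq (A_+)^*$ and type (D) of $A_+$), and then inferring maximality of the sum $A^*$ on $X^{**}\times X^*$.

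\textbf{Main obstacle.} The core difficulty is the converse bridge (v)$\Rightarrow$(iv), that is, extracting monotonicity of $(A_\mathlarger{\circ})^*$ from that of $A^*$. Since $(A_\mathlarger{\circ})^* = A^* - (A_+)^*$ and differences of monotone operators are not monotone in general, naive subtraction fails. My strategy is to use the type-(NI) characterization (equivalent to (v) by Theorem~\ref{BrBrTD:1}): for any $(x^{**},x^*)\in X^{**}\times X^*$, the Fitzpatrick supremum of $A$ dominates $\langle x^{**},x^*\rangle$. Decomposing $a^* = a_+^* + a_\mathlarger{\circ}^*$ and exploiting skewness $\langle a, a_\mathlarger{\circ}^*\rangle = 0$, the supremum splits into an $A_+$-piece and an $A_\mathlarger{\circ}$-piece; for $(x^{**},-x^*)\in\gra (A_\mathlarger{\circ})^*$ the latter collapses by perpendicularity, and bounding the former through $A_+ = \partial\overline{q_A}$ together with Br\o ndsted--Rockafellar (Fact~\ref{f:FJB5}) should force the required sign, verifying that $A_\mathlarger{\circ}$ is of type (NI) and hence (iv) through Corollary~\ref{CoSBr:1}. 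This is the step I expect to demand the most delicate estimation, mirroring the careful convex-analytic bookkeeping used in Proposition~\ref{PrTD:1}.
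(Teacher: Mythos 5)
Your overall architecture (force maximality of $A$ under each item, invoke Proposition~\ref{linear} to obtain $A^*=(A_+)^*+(A_{\mathlarger{\circ}})^*$ with $A_+=\partial\overline{q_A}$ of type (D), settle (ii)$\Leftrightarrow$(iii)$\Leftrightarrow$(iv) by Corollary~\ref{CoSBr:1} applied to $A_{\mathlarger{\circ}}$, and get (iv)$\Rightarrow$(v) by adding two monotone adjoints) matches the paper's. The gap is exactly where you predict difficulty, and your proposed mechanism cannot close it. Write out your NI splitting: for $(z^{**},z^*)\in\gra (A_{\mathlarger{\circ}})^*$ and $(a,a^*)\in\gra A$ with $a^*=a_+^*+a_{\mathlarger{\circ}}^*$, the perpendicularity relation gives $\langle a_{\mathlarger{\circ}}^*,z^{**}\rangle=\langle a,z^*\rangle$ and skewness gives $\langle a,a^*\rangle=2q_A(a)$, so testing type (NI) of $A$ at $(z^{**},-z^*)$ yields precisely
\begin{equation*}
\langle z^{**},z^*\rangle\;\geq\;-\sup_{a\in\dom A}\big(\langle A_+a,\,z^{**}\rangle-2q_A(a)\big).
\end{equation*}
That supremum is nonnegative (take $a=0$) and generically strictly positive: already for a symmetric positive semidefinite $Q$ on $\RR^n$ one has $\sup_a\big(\langle Qa,x\rangle-\langle Qa,a\rangle\big)=\tfrac{1}{4}\langle Qx,x\rangle$, attained at $a=x/2$. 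So the inequality you extract is $\langle z^{**},z^*\rangle\geq -(\text{something positive})$, which is strictly weaker than the monotonicity $\langle z^{**},z^*\rangle\geq 0$ you need; testing NI at $(z^{**},z^*)$ instead only bounds $\langle z^{**},z^*\rangle$ from above. No choice of test point, and no appeal to Br\o ndsted--Rockafellar for $\overline{q_A}$, repairs this: the NI inequality is a lower bound on a supremum, whereas you need an upper bound of $0$ on the $A_+$-piece.

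The paper's proof of this step (its implication \ref{MaTC:1}$\Rightarrow$\ref{MaTC:02}) uses strictly more structure: it writes $a^*=b^*+x^*$ with $(x^{**},a^*)\in\gra A^*$ and $(x^{**},b^*)\in\gra (A_+)^*$, invokes the type (D) property of $A_+$ to produce a \emph{bounded} net $(a_\alpha,b^*_\alpha)$ in $\gra A_+$ with $a_\alpha\weakstarly x^{**}$ and $b^*_\alpha\to b^*$ in norm, shows via maximality of $A_{\mathlarger{\circ}}$ that the skew components satisfy $(-a_\alpha,c^*_\alpha)\in\gra A_{\mathlarger{\circ}}$, and then tests the monotonicity of $A^*$ against $(x^{**},a^*)$ and $(a_\alpha,a^*_\alpha)$; the cross term $\langle x^{**}-a_\alpha,\,a^*-b^*_\alpha\rangle$ tends to $0$ precisely because of the weak*$\times$strong convergence together with boundedness, leaving $\langle x^{**},x^*\rangle\geq 0$ in the limit. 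You would need to reconstruct this (or an equivalent) argument. Separately, your plan for (v)$\Rightarrow$(vi) --- inferring maximality of $A^*$ on $X^{**}\times X^*$ as a ``sum of two maximal monotone adjoints'' --- is also unjustified: sums of maximal monotone operators need not be maximal, and here the pairing is nonstandard. The paper proves (v)$\Rightarrow$(vi) directly, using $\dom A^*=(A^*0)^{\bot}$ and a scaling/perpendicularity argument in the spirit of Proposition~\ref{P:a}\ref{Th:031+}, without reference to the skew decomposition.
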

\begin{proof}
``\ref{MaTC:1}$\Rightarrow$\ref{MaTC:02}": By Fact~\ref{TypeD:1},
\begin{align}
A^*\ \text{is monotone}.\label{SCK:1}
\end{align}
By Proposition~\ref{f:PheSim}\ref{dms:quad} and Fact~\ref{ITPR},
 \begin{align}
A_+\
\text{ is maximally monotone of type (D)}.\label{SCK:04}
\end{align}
By Fact~\ref{TypeD:1},
\begin{align}
(A_+)^* \ \text{is monotone}.\label{SCK:5a}
\end{align}
Now we show that
\begin{align}
(A_{\mathlarger{\circ}})^* \ \text{is monotone}.\label{SCK:5}
\end{align}
Proposition~\ref{linear}\ref{dmns:2} implies
\begin{align}
A^*=(A_+)^*+(A_{\mathlarger{\circ}})^*.\label{SCK:3}
\end{align}
Since $A$ is maximally monotone and $\dom A$ is closed,
Proposition~\ref{linear}\ref{dmns:1} implies that
$A_{\mathlarger{\circ}}$ is maximally monotone.
Hence $\gra (A_{\mathlarger{\circ}})$ is closed.
On the other hand,
again since $A$ is maximally monotone and $\dom A$ is closed,
Proposition~\ref{linear}\ref{sia:3vi} yields
$\dom (A_{\mathlarger{\circ}}) = \dom A$ is closed.
Altogether, and combining with Fact~\ref{Rea:1}\ref{Th:32}
applied to $A_{\mathlarger{\circ}}$, we obtain
$\dom (A_{\mathlarger{\circ}})^* = (A_{\mathlarger{\circ}}0)^{\bot}$.
Furthermore, since $A0 = A_{\mathlarger{\circ}}0$ by
Proposition~\ref{linear}\ref{sia:3iv}, we have
$(A0)^\bot = (A_{\mathlarger{\circ}}0)^\bot$.
Moreover, applying Fact~\ref{Rea:1}\ref{Th:32}
to $A$, we deduce that $\dom A^* = (A0)^\bot$.
Therefore,
 \begin{align}\dom (A_{\mathlarger{\circ}})^*
 = (A_{\mathlarger{\circ}}0)^{\bot}=(A0)^{\bot}=\dom A^*.
 \label{SCK:6}
 \end{align}
 Similarly, we have
  \begin{align}\dom (A_+)^*=\dom A^*.
 \label{SCK:7}
 \end{align}
Take $(x^{**},x^*)\in\gra (A_{\mathlarger{\circ}})^*$.
By \eqref{SCK:3} and \eqref{SCK:6},
there exist $a^*, b^*\in X^*$ such that \begin{align}
(x^{**},a^*)\in \gra A^*,
 (x^{**}, b^*)\in \gra (A_+)^*\label{SCK:08}\end{align} and
\begin{align}
a^*=b^*+x^*.\label{SCKL:08}
\end{align}
Since $A_+$ is symmetric, $\gra A_+\subseteq\gra (A_+)^*$.
Thus, by \eqref{SCK:5a},
$(x^{**}, b^*)$ is monotonically related to $\gra A_+$.
 By \eqref{SCK:04}, there exist
a  bounded net
$(a_{\alpha}, b^*_{\alpha})_{\alpha\in\Gamma}$ in $\gra A_+$
such that
$(a_{\alpha}, b^*_{\alpha})_{\alpha\in\Gamma}$
weak*$\times$strong converges to
$(x^{**},b^*)$.
Thus $(a_{\alpha}, b^*_{\alpha})\in\gra (A_+)^*$.
By \eqref{SCK:7} and \eqref{SCK:3},
there exist  $a^*_{\alpha}\in A^* a_{\alpha},
 c^*_{\alpha}\in (A_{\mathlarger{\circ}})^*a_{\alpha}$ such that
\begin{align}
a^*_{\alpha}=b^*_{\alpha}+c^*_{\alpha},\quad\forall \alpha\in\Gamma.
 \label{SCK:8}
\end{align}
Thus by Fact~\ref{Rea:1}\ref{Sia:2b},
\begin{align}
\langle a_{\alpha}, c^*_{\alpha}\rangle
=\langle A_{\mathlarger{\circ}}a_{\alpha}, a_{\alpha}\rangle=0,
\quad  \forall \alpha\in\Gamma.
\label{SCK:9}
\end{align}
Hence for every $\alpha\in\Gamma$,
$(-a_{\alpha}, c^*_{\alpha}) $ is monotonically
 related to $\gra A_{\mathlarger{\circ}}$.
By Proposition~\ref{linear}\ref{dmns:1},
\begin{align}
(-a_{\alpha}, c^*_{\alpha})\in\gra A_{\mathlarger{\circ}},\quad
\forall \alpha\in\Gamma.\label{SCK:9a}
\end{align}
By \eqref{SCK:1} and \eqref{SCK:08}, we have
\begin{align}
0&\leq\langle x^{**}-a_{\alpha}, a^*-a^*_{\alpha}\rangle
=\langle x^{**}-a_{\alpha}, a^*-b^*_{\alpha}-c^*_{\alpha}\rangle
\quad\text{(by \eqref{SCK:8})} \nonumber\\
&=\langle x^{**}-a_{\alpha}, a^*-b^*_{\alpha}\rangle-\langle x^{**}, c^*_{\alpha}\rangle
+\langle a_{\alpha}, c^*_{\alpha}\rangle\nonumber\\
&=\langle x^{**}-a_{\alpha}, a^*-b^*_{\alpha}\rangle-\langle x^{**}, c^*_{\alpha}\rangle
\quad\text{(by \eqref{SCK:9})}\nonumber\\
&=\langle x^{**}-a_{\alpha}, a^*-b^*_{\alpha}\rangle+\langle x^{*}, a_{\alpha}\rangle
\quad \text{(by \eqref{SCK:9a} and $(x^{**},x^*)\in\gra (A_{\mathlarger{\circ}})^*$)}.
\label{SCK:10}
\end{align}
Taking  the limit in \eqref{SCK:10} along with
 $ a_{\alpha} \weakstarly x^{**}$ and $b^*_{\alpha}
\rightarrow b^*$, we have
\begin{align*}
\langle x^{**},x^*\rangle\geq0.
\end{align*}
Hence $(A_{\mathlarger{\circ}})^*$ is monotone
and thus \eqref{SCK:5} holds.  Combining \eqref{SCK:5}, Proposition~\ref{linear}\ref{dmns:1}
and Fact~\ref{TypeD:1}, we see that
$A_{\mathlarger{\circ}}$ is of type (D).

``\ref{MaTC:02}$\Rightarrow$\ref{MaTC:2}$\Rightarrow$\ref{MaTC:3}":
 Apply Corollary~\ref{CoSBr:1} to
$A_{\mathlarger{\circ}}$.

``\ref{MaTC:3}$\Rightarrow$\ref{MaTC:4}":
By Fact~\ref{Rea:1}\ref{Sia:1} and Proposition~\ref{linear}\ref{sia:3v},
$A$ is maximally monotone.
Then by Proposition~\ref{linear}\ref{dmns:2}
and Proposition~\ref{f:PheSim}\ref{dms:quad}, we have
\begin{align}
A^*=(A_+)^*+(A_{\mathlarger{\circ}})^*\ \text{and}
 \ A_+=\partial \overline{q_A}.\label{SCK:11}
\end{align}
Then $A_+$ is of type (D) by Fact~\ref{ITPR}, and hence $(A_+)^*$
 is monotone by Fact~\ref{TypeD:1}.
Thus, by the assumption and \eqref{SCK:11}, we have $A^*$ is monotone.

``\ref{MaTC:4}$\Rightarrow$\ref{MaTC:5}":
By Proposition~\ref{PrTD:1},
$A$ is maximally monotone. Then by Fact~\ref{Rea:1}\ref{Th:32}
and Proposition~\ref{linear}\ref{sia:3iv},
\begin{align}
\dom A^*=(A^*0)^{\bot}.\label{MAJo:1}
\end{align}
Then by Fact~\ref{rudin:1} and Fact~\ref{Rea:1}\ref{Sia:1},
\begin{align}
\left[\dom A^*\right]_{\bot}=A^*0.\label{MAJo:2}
\end{align}
Let $(x^{**},x^*)\in X^{**}\times X^*$
be monotonically related to $\gra A^*$.
Because $\{0\}\times A^*0\subseteq\gra A^*$,
we have $\inf\langle x^{**}, x^*-A^*0\rangle\geq 0$.
 Since $A^*0$ is a subspace, $x^{**}\in (A^*0)^{\bot}$.
Then by \eqref{MAJo:1},
\begin{align}x^{**}\in\dom A^*.\end{align}
Take $(x^{**},x_0^{**})\in \gra A^{*}$ and
$\lambda>0$.
For every $(a^{**},a^*)\in\gra A^{*}$, we have $(\lambda a^{**}, \lambda a^*)\in\gra A^{*}$ and
hence
 $(x^{**}+\lambda a^{**},x_0^*+
\lambda a^*)\in \gra A^{*}$ (since $\gra A^{*}$ is a subspace).
Thus
\begin{align*}\lambda\langle a^{**}, x_0^*+\lambda a^*-x^*\rangle=
\langle x^{**}+\lambda a^{**}-x^{**}, x_0^*+\lambda a^*-x^*\rangle\geq 0.\end{align*}
Now divide by $\lambda$ to obtain
$\lambda\langle a^{**},  a^*\rangle\geq \langle a^{**}, x^*-x_0^*
\rangle$.
Then  let $\lambda\rightarrow 0^+$ to see that
$0\geq \sup\langle \dom A^*,\ x^*-x_0^*\rangle$.  Thus, $x^*-x_0^*\in (\dom A^*)_{\bot}$.
By \eqref{MAJo:2}, $x^*\in x^*_0+A^*0\subseteq A^*x^{**}+A^*0$. Then there exists $(0,z^*)\in \gra A^*$ such that $(x^{**}, x^*-z^*)\in\gra A^*$. Since $\gra A^*$ is s a subspace,
 $(x^{**}, x^*)=(0,z^*)+(x^{**}, x^*-z^*)\in\gra A^*$.
Hence
$A^*$ is maximally monotone with respect to $X^{**}\times X^*$.

``\ref{MaTC:5}$\Rightarrow$\ref{MaTC:1}":
  Apply Proposition~\ref{PrTD:1} directly.
\end{proof}

The next three examples show the need for various of our auxiliary hypotheses.

\begin{example}
We cannot remove the condition that $A^*0=A0$ in
 Theorem~\ref{TSkeD:1}\ref{MaTC:3}. For example,
  suppose that $X=\RR^2$  and set $e_1=(1,0), e_2=(0,1)$. We define
  $A:X\rightrightarrows X$ by
\begin{equation*}
\gra A=\spand\{e_1\}\times\{0\}\;\text{ so that }\;
\gra A^*=X\times \spand\{e_2\}.
\end{equation*}
Then $A$ is monotone, $\dom A$ is closed, and $\gra A$ is closed.
Thus
\begin{align}
\gra A_{\mathlarger{\circ}}=\spand\{e_1\}\times\spand\{e_2\}
\end{align}
and so
\begin{align*}\gra (A_{\mathlarger{\circ}})^*=\spand\{e_2\}\times\spand\{e_1\}.
\end{align*} Hence  $(A_{\mathlarger{\circ}})^*$ is monotone,
but $A$ is not maximally monotone because $\gra A \subsetneqq \gra N_X$.
\endproof
\end{example}

\begin{example}\label{CDEx:1}
We cannot replace that ``$\dom A$ is closed"
 by  that ``$ \dom A$ is dense" in the statement of
 Theorem~\ref{TSkeD:1}.
 For example, let $X, A$ be defined as in Example~\ref{SRE:1}
 and consider the operator $A^*$.
Example~\ref{SRE:1}\ref{NEC:3}\&\ref{NEC:2} state that  $A^*$ is maximally
monotone with dense domain; hence, $\gra A^*$ is closed.
 Moreover, by Example~\ref{SRE:1}\ref{NEC:1},
 \begin{align}
 (A^*)_{\mathlarger{\circ}}=-A.
 \end{align}
Hence
 \begin{align}
 \left[(A^*)_{\mathlarger{\circ}}\right]^*=-A^*.
 \end{align}
Thus $\left[(A^*)_{\mathlarger{\circ}}\right]^*$ is
 not  monotone by Example~\ref{SRE:1}\ref{NEC:3};
 even though $A^*$ is a classically maximally monotone and densely defined linear operator. \endproof
\end{example}

\begin{example}
We cannot remove the condition that
$(A_{\mathlarger{\circ}})^*$ is monotone
in Theorem~\ref{TSkeD:1}\ref{MaTC:3}. For example,
consider the Gossez operator $A$ (see \cite{Gossez72} and \cite{BB}).
It satisfies
$X=\ell^1$, $\dom A=X$,
$A_{\mathlarger{\circ}} = A$,
$A0=\{0\}=A^*0$, yet $A^*$ is not monotone.\endproof
\end{example}

\begin{remark}
Let $A\colon X\rightrightarrows X^*$
 be a maximally monotone linear relation.
 \begin{enumerate}
 \item
 In general, $(A^*)_{\mathlarger{\circ}}\neq (A_{\mathlarger{\circ}})^*$.
 To see that, let $X, A$ be  as in Example~\ref{SRE:1} again.
  By Example~\ref{SRE:1}\ref{NEC:1},
 we have
\begin{align*}(A^*)_{\mathlarger{\circ}}=-A\;\text{and}\;
 (A_{\mathlarger{\circ}})^*= A^*.
\end{align*}
Hence $(A^*)_{\mathlarger{\circ}}\neq (A_{\mathlarger{\circ}})^*$ by Example~\ref{SRE:1}\ref{NEC:2}.

\item
However, if $X$ is finite-dimensional, we do have $(A^*)_{\mathlarger{\circ}}= (A_{\mathlarger{\circ}})^*$.
Indeed, by Fact~\ref{LinAdSum},
\begin{align*}
(A_{\mathlarger{\circ}})^*=\left(\frac{A-A^{*}}{2}\right)^*
=\frac{A^{*}-A^{**}}{2}=(A^*)_{\mathlarger{\circ}}.
\end{align*}
We expect that  $(A^*)_{\mathlarger{\circ}}= (A_{\mathlarger{\circ}})^*$ for all maximally  monotone linear relations if and only if $X$ is 
finite-dimensional.
\end{enumerate}
\end{remark}

We are now able to present our main result relating  monotonicity and adjoint properties of $A$ and those of its skew part
$A_{\mathlarger{\circ}}$.

\begin{theorem} [Adjoint characterizations of type (D)] \label{BrBrTDS:1}
Let $A\colon X\rightrightarrows X^*$
 be a monotone linear relation such that $\gra A$ is closed and
$\dom A$ is closed.
Then the following are equivalent.
\begin{enumerate}

\item $A$ is maximally monotone of type (D).

\item $A$ is  maximally monotone  of type (NI).

\item $A$ is  maximally monotone  of type (FP).
\item  $A^*$ is monotone.
\item  $A^*$  is maximally monotone with respect to $X^{**}\times X^*$.
\item $A_{\mathlarger{\circ}}$
is maximally monotone  of type (D) and $A^*0=A0$.
\item $(A_{\mathlarger{\circ}})^*$
is maximally monotone with respect to $X^{**}\times X^*$ and $A^*0=A0$.
\item $(A_{\mathlarger{\circ}})^*$ is  monotone and $A^*0=A0$.

\end{enumerate}
\end{theorem}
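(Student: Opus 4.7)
The plan is to observe that all eight conditions have in essence already been paired up by prior results, so the proof reduces to gluing Theorem~\ref{BrBrTD:1} together with Theorem~\ref{TSkeD:1} along their common conditions.

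First I would invoke Theorem~\ref{BrBrTD:1}, which requires only that $\gra A$ be closed and hence applies here. This immediately gives the equivalence of the four ``classical'' statements about $A$ itself, namely the equivalence of being maximally monotone of type (D), (NI), (FP), and of having a monotone adjoint. This handles the ring connecting items (i), (ii), (iii), and (iv) of the theorem.

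Next I would invoke Theorem~\ref{TSkeD:1}, whose hypotheses (closed graph \emph{and} closed domain) are precisely those assumed here. Reading off its list of equivalences, it relates ``$A$ is maximally monotone of type (D)'' to ``$A^*$ is monotone,'' to ``$A^*$ is maximally monotone with respect to $X^{**}\times X^*$,'' to ``$A_{\mathlarger{\circ}}$ is maximally monotone of type (D) with $A^*0 = A0$,'' to ``$(A_{\mathlarger{\circ}})^*$ is maximally monotone with respect to $X^{**}\times X^*$ with $A^*0 = A0$,'' and to ``$(A_{\mathlarger{\circ}})^*$ is monotone with $A^*0 = A0$.'' These correspond exactly to items (i), (iv), (v), (vi), (vii), and (viii) of the present statement.

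Finally, since items (i) and (iv) appear in both chains of equivalences supplied above, the two chains fuse into a single ring through all eight conditions, completing the proof. There is essentially no obstacle here: the content of the theorem is a packaging statement whose only substantive inputs are Theorem~\ref{BrBrTD:1} and Theorem~\ref{TSkeD:1}, and the only subtlety to check is that the hypotheses required by Theorem~\ref{TSkeD:1} (closedness of both $\gra A$ and $\dom A$) are assumed here, which they are.
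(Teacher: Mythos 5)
Your proposal is correct and matches the paper's own proof, which likewise disposes of the theorem by combining Theorem~\ref{BrBrTD:1} (for the equivalence of (i)--(iv) under closedness of the graph alone) with Theorem~\ref{TSkeD:1} (for the remaining items under the additional closed-domain hypothesis), the two chains being glued along the shared conditions.
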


\begin{proof}
Apply Theorem~\ref{TSkeD:1} and Theorem~\ref{BrBrTD:1}.
\end{proof}

The work in \cite{BBWY3} suggests that in every nonreflexive Banach space there is a maximally monotone linear relation which is not of type (D).

When  $A$   is linear and continuous, Theorem~\ref{BrBrTDS:1}
can also be deduced from \cite[Theorem~4.1]{BB}.
When $X$ is reflexive and $\dom A$ is closed,
Theorem~\ref{BrBrTDS:1} turns into the following
refined version of Fact~\ref{Sv:7}:

\begin{corollary} \label{BrBrTDS:2}
Suppose that $X$ is reflexive and let
$A\colon X\rightrightarrows X^*$
 be a monotone linear relation.
 such that $\gra A$ is closed and $\dom A$ is closed.
Then the following are equivalent.
\begin{enumerate}
\item
$A$ is  maximally monotone.
\item
$A^*$ is monotone.
\item
$A^*$ is maximally monotone.
\item
$A0=A^*0$.
\end{enumerate}
\end{corollary}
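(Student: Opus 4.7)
The plan is to reduce this corollary to a short assembly of results already stated in the excerpt; reflexivity lets one bypass all the type (D)/(NI)/(FP) machinery.

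First, the equivalence of \ref{BRS:1}, \ref{BRS:2}, and \ref{BRS:3} is precisely Fact~\ref{Sv:7} (the reflexive Br\'ezis--Browder theorem), which requires only the closedness of $\gra A$. So the only genuinely new content is the equivalence with $A0 = A^*0$.

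For the forward direction, I would argue that if $A$ is maximally monotone, then Proposition~\ref{linear}\ref{sia:3iv} gives the chain $A0 = A^*0 = (\dom A)^{\perp}$ immediately (in a reflexive space $\overline{\dom A^*}^{\wk}\cap X$ simplifies harmlessly, but one need not even invoke this; Proposition~\ref{linear}\ref{sia:3iv} delivers $A0 = A^*0$ directly).

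For the converse, the closedness of $\dom A$ is what does the work. Fact~\ref{Rea:1}\ref{Sia:1} gives $(\dom A)^{\perp} = A^*0$ for any linear relation with closed graph, so the hypothesis $A0 = A^*0$ yields $(\dom A)^{\perp} = A0$. Since $\dom A$ is assumed closed, Proposition~\ref{linear}\ref{sia:3v} then concludes that $A$ is maximally monotone.

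I do not anticipate any serious obstacle; the corollary is a clean packaging of Fact~\ref{Sv:7}, Fact~\ref{Rea:1}\ref{Sia:1}, and Proposition~\ref{linear}\ref{sia:3iv}\&\ref{sia:3v}. The closedness of $\dom A$ is used in exactly one place, namely to activate the $\Leftarrow$ direction of Proposition~\ref{linear}\ref{sia:3v} in the passage from (iv) back to (i).
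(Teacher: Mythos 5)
Your argument is correct, and your step (iv)$\Rightarrow$(i) --- Fact~\ref{Rea:1}\ref{Sia:1} giving $(\dom A)^\perp = A^*0 = A0$, then Proposition~\ref{linear}\ref{sia:3v} --- is exactly what the paper does. Where you diverge is in the remaining implications: the paper derives (i)$\Leftrightarrow$(ii)$\Leftrightarrow$(iii)$\Rightarrow$(iv) from Theorem~\ref{BrBrTDS:1} together with Fact~\ref{ITPR}(ii) (in a reflexive space maximal monotonicity coincides with maximal monotonicity of type (D), so the eight-way equivalence of Theorem~\ref{BrBrTDS:1} collapses onto the four statements of the corollary, with the condition $A0=A^*0$ read off from the clause ``$A^*0=A0$'' in its last three items). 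You instead obtain (i)$\Leftrightarrow$(ii)$\Leftrightarrow$(iii) from the classical reflexive Br\'ezis--Browder theorem, Fact~\ref{Sv:7}, and (i)$\Rightarrow$(iv) from Proposition~\ref{linear}\ref{sia:3iv}. Both routes are valid. The paper's choice exhibits the corollary as a genuine specialization of its new Theorem~\ref{BrBrTDS:1}, which is the point of stating it; your choice is more economical and makes visible that the closedness of $\dom A$ is needed only for (iv)$\Rightarrow$(i), the first three equivalences holding under closedness of $\gra A$ alone (which is precisely the content of Fact~\ref{Sv:7}). Since the corollary is advertised as a refinement of Fact~\ref{Sv:7}, citing that fact for the non-new part is a legitimate, if less self-contained, shortcut.
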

\begin{proof}
``(i)$\Leftrightarrow$(ii)$\Leftrightarrow$(iii)$\Rightarrow$(iv)'':
This follows from
Theorem~\ref{BrBrTDS:1} and
Fact~\ref{ITPR}(ii).

``(iv)$\Rightarrow$(i)'':
Fact~\ref{Rea:1}\ref{Sia:1} implies that
$(\dom A)^\perp = A^*0 = A0$.
By Proposition~\ref{linear}\ref{sia:3v}, $A$ is maximally monotone.
\end{proof}

When $X$ is finite-dimensional, the closure assumptions in
the previous result are automatically satisfied
and we thus obtain the following:

\begin{corollary}
Suppose that $X$ is finite-dimensional.
Let $A\colon X\rightrightarrows X^*$
 be a monotone linear relation.
Then the following are equivalent.
\begin{enumerate}
\item
$A$ is  maximally monotone.
\item
$A^*$ is monotone.
\item
$A^*$ is maximally monotone.
\item
$A0=A^*0$.
\end{enumerate}
\end{corollary}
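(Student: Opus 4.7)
The plan is simply to reduce to Corollary~\ref{BrBrTDS:2} by verifying that its two closure hypotheses (closedness of $\gra A$ and closedness of $\dom A$) and its reflexivity hypothesis all come for free when $X$ is finite-dimensional.

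First I would note that every finite-dimensional normed space is reflexive, so the reflexivity hypothesis of Corollary~\ref{BrBrTDS:2} is immediate. Next, since $X$ is finite-dimensional, so is $X \times X^*$, and every linear subspace of a finite-dimensional normed space is closed in the norm topology. Because $A$ is a linear relation, $\gra A$ is a linear subspace of $X \times X^*$, hence closed. Similarly, $\dom A = P_X(\gra A)$ is a linear subspace of $X$, hence closed.

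With all three hypotheses of Corollary~\ref{BrBrTDS:2} satisfied, the equivalences (i)--(iv) transfer verbatim. No real obstacle arises here: the content of this corollary is exactly the observation that the topological closure assumptions present in the reflexive version are automatic in finite dimensions, so the general Br\'ezis--Browder conclusion specializes cleanly.
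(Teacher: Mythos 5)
Your proposal is correct and matches the paper's own reasoning exactly: the paper introduces this corollary with the remark that the closure assumptions of Corollary~\ref{BrBrTDS:2} are automatic in finite dimensions, which is precisely your argument (linear subspaces of a finite-dimensional space are closed, and such spaces are reflexive). Nothing further is needed.
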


\section*{Acknowledgments}
Heinz Bauschke was partially supported by the Natural Sciences and
Engineering Research Council of Canada and
by the Canada Research Chair Program.
Jonathan  Borwein was partially supported by the Australian Research  Council.
Xianfu Wang was partially supported by the Natural
Sciences and Engineering Research Council of Canada.


\end{document}